\documentclass[12pt,english,reqno]{amsart}
\usepackage[T1]{fontenc}
\usepackage[utf8]{inputenc}
\usepackage[a4paper]{geometry}
\usepackage{amstext}
\usepackage{amsthm}
\usepackage{amssymb}
\usepackage{setspace}
\makeatletter
\numberwithin{equation}{section}
\numberwithin{figure}{section}
\theoremstyle{plain}
\newtheorem{thm}{\protect\theoremname}[section]
\theoremstyle{definition}
\newtheorem{defn}[thm]{\protect\definitionname}
\theoremstyle{remark}
\newtheorem{observation}[thm]{\protect\observationname}
\theoremstyle{plain}
\newtheorem{cor}[thm]{\protect\corollaryname}
\theoremstyle{remark}
\newtheorem{rem}[thm]{\protect\remarkname}
\theoremstyle{plain}
\newtheorem{lem}[thm]{\protect\lemmaname}
\theoremstyle{remark}
\newtheorem{claim}[thm]{\protect\claimname}

\usepackage{amsmath}
\usepackage{bbm}
\usepackage{dsfont}

\usepackage[unicode=true,pdfusetitle,
 bookmarks=true,bookmarksnumbered=false,bookmarksopen=false,
 breaklinks=false,pdfborder={0 0 0},pdfborderstyle={},backref=false,colorlinks=false]
 {hyperref}

\usepackage{tikz}
\usetikzlibrary{arrows,decorations.markings,shapes}
\usetikzlibrary{patterns}

\usepackage{etoolbox}
\patchcmd{\@settitle}{\uppercasenonmath\@title}{}{}{}
\patchcmd{\@setauthors}{\MakeUppercase}{}{}{}
\patchcmd{\section}{\scshape}{}{}{}

\makeatother

\usepackage{babel}
\providecommand{\claimname}{Claim}
\providecommand{\corollaryname}{Corollary}
\providecommand{\definitionname}{Definition}
\providecommand{\lemmaname}{Lemma}
\providecommand{\observationname}{Observation}
\providecommand{\remarkname}{Remark}
\providecommand{\theoremname}{Theorem}

\begin{document}
\global\long\def\One{\mathds{1}}%

\global\long\def\Laplacian{\Delta}%

\global\long\def\grad{\operatorname{grad}}%

\global\long\def\div{\operatorname{div}}%

\global\long\def\norm#1{\left\Vert #1\right\Vert }%

\global\long\def\Z{\mathbb{Z}}%

\global\long\def\R{\mathbb{R}}%

\global\long\def\C{\mathbb{C}}%

\global\long\def\N{\mathbb{N}}%

\global\long\def\T{\mathbb{T}}%

\global\long\def\P{\mathbb{P}}%

\global\long\def\E{\mathbb{E}}%

\global\long\def\cL{\mathcal{L}}%

\global\long\def\cD{\mathcal{D}}%

\global\long\def\cH{\mathcal{H}}%

\global\long\def\floor#1{\left\lfloor #1\right\rfloor }%

\global\long\def\ceil#1{\left\lceil #1\right\rceil }%

\global\long\def\var{\operatorname{Var}}%

\global\long\def\cov{\operatorname{Cov}}%

\global\long\def\dd{\operatorname{d}}%

\title{A Thomson-type variational principle for diffusion coefficients}
\author{Assaf Shapira}
\address{Université Paris Cité, CNRS, MAP5, F-75006 Paris, France }
\email{assaf.shapira@math.cnrs.fr}
\urladdr{https://assafshap.github.io}
\begin{abstract}
We consider reversible interacting particle systems with conserved
number of particles. A standard variational formulation describes
the diffusion coefficient of such models as the infimum of a certain
functional. The purpose of this paper is to derive a new, alternative,
variational characterization, as the \emph{supremum} of another functional.
This is a more natural framework when one is interested in obtaining
lower bounds on the diffusion coefficient. We present a specific example
of a kinetically constrained lattice gas where this variational principle
can be applied.
\end{abstract}

\maketitle

\section{Introduction}

The diffusion coefficient is an important characteristic of reversible
interacting particle systems with a conserved quantity. It is given
by a positive semidefinite matrix $D$, depending on the density $\rho$.
Generically \cite{KipnisLandim,Spohn2012IPS}, one expects a density
profile converging at diffusive scale to a solution of the diffusion
equation:
\[
\partial_{t}\rho(t,x)=\nabla(D(\rho(t,x))\nabla\rho(t,x)).
\]
It is, however, instructive to consider the diffusion coefficient
even without directly relating it to a hydrodynamic limit; see, e.g.,
\cite{Shapira23KA_HL,Shapira2024Noncooperative,AritaKrapivskyMallik17Diffusion_coefficient,AritaKrapivskyMallick2018,SchulerMessinaNastar20KineCluE,Sasada18GreenKubo}
for both theoretical and numerical studies. Understanding the behavior
of $D$ gives a good candidate both for a hydrodynamic limit and for
the equilibrium fluctuation field, and provides insights on time scales
of the model. In particular, the \emph{positivity} of the diffusion
coefficient implies (at least in a soft sense \cite{Shapira23KA_HL})
that the density profile evolves in diffusive time scales; while $0$
diffusion coefficient means that the evolution is much slower than
diffusive.

One may see $D$, via a fluctuation-dissipation relation, as describing
the space-time density correlations in equilibrium, see \cite[Section II.2.2]{Spohn2012IPS}.
This is expressed in the Green-Kubo formula given in equation (\ref{eq:green-kubo}).
As observed in \cite[Section II.2.2]{Spohn2012IPS}, it is then possible
to find $D$ by solving the Poisson problem
\begin{equation}
\cL f=j_{l},\label{eq:poisson_problem}
\end{equation}
where $\cL$ is the infinitesimal generator of the process and $j_{l}$
is given in Definition \ref{def:phil_jl}. Proposition 2.2 of \cite[Part II]{Spohn2012IPS}
approaches this problem using variational methods, expressing the
solution (formally) as the minimizer of some Dirichlet energy. This
allows us to write $D$ as an infimum of a certain functional, acting
on local functions defined on the state space (see equation (\ref{eq:dirichlet})).
Thus, applying this functional to any test function of our choice,
we are able to bound the diffusion coefficient from above.

The purpose of this paper is to present an alternative variational
principle for the diffusion coefficient: rather than analyzing the
Poisson problem (\ref{eq:poisson_problem}) using Dirichlet's principle,
we consider Thomson's principle. This gives us a \emph{maximization}
problem over flow functions satisfying certain properties. Thus, $D$
is expressed as a \emph{supremum} of some functional, and can be bounded
from below by applying it to test flows. This is the content of the
main result, Theorem \ref{thm:thomson}. We remark that Thomson's
principle has been used to analyze several other observables of Markov
processes, for example hitting probabilities \cite{doyle_snell1984},
metastability \cite{bovierdenhollander2016metastability,LandimMarianiSeo2019dirichlet},
or homogenization \cite{Owhadi2003effective_conductivity}.

After proving this Thomson-type variational principle, we show how
to apply it in practice. Since test flows must satisfy certain conditions,
finding them is not a trivial task. We will see three possible strategies
to construct such flows for one particular model, introduced by Bertini
and Toninelli in \cite{BertiniToninelli}. This model is a simple
example of a \emph{kinetically constrained lattice gas}, a family
of interacting particle systems devised in the physics literature
in order to study the liquid-glass transition; it can be seen as the
conservative version of \emph{kinetically constrained spin models}
\cite{HartarskyToninelli2024kcmbook}. Kinetically constrained lattice
gases have degenerate jump rates, slowing down the dynamics and making
$D$ very small at high densities. In particular, while the Bertini-Toninelli
model is known to have a strictly positive diffusion coefficient (\cite{GoncalvesLandimToninelli,Shapira2024Noncooperative},
see also equation (\ref{eq:BT_bound0})), it is unclear under which
conditions this holds in more generality. Moreover, kinetically constrained
lattice gases are notoriously difficult to analyze (especially out
of equilibrium) since they are not attractive, making techniques such
as monotone coupling unavailable. Variational tools are hence essential
to understanding these models.

We will improve the lower bound on $D$ for the Bertini-Toninelli
model, and more importantly develop techniques that could open the
door to analyzing other models in a larger setting, where even positivity
of the diffusion coefficient is unknown.

\subsection{Perspective and further questions}

The variational principle in Theorem \ref{thm:thomson} offers a promising
approach to studying diffusion coefficients of different interacting
particle systems. One example where variational methods are particularly
useful, is the study of kinetically constrained lattice gases. The
methods demonstrated here, in particular in Section \ref{subsec:unbounded_path},
should apply to other kinetically constrained lattice gases, including
some where we are currently not even able to prove that $D$ is nonzero.

The maximization principle we introduced could also be used in rigorous
numerics: \cite{AritaKrapivskyMallik17Diffusion_coefficient,AritaKrapivskyMallick2018},
for example, use the minimization principle (\ref{eq:dirichlet})
to estimate diffusion coefficients, obtaining rigorous upper bounds
converging to the true value. The work presented here could be used
in order to add exact lower bounds, providing a precise control of
the error in such numerical schemes.

A minimization principle in the spirit of equation (\ref{eq:dirichlet})
is given in \cite{Faggionato08} for reversible processes in a random
translation invariant ergodic environment. One may naturally ask if
a Thomson-type variational principle can be derived in this setting.

Finally, a similar but not identical question is that of the self-diffusion
coefficient, giving the scaling limit of a tracer, see \cite[Section II.6.2]{Spohn2012IPS}.
The self diffusion coefficient can also be written in terms of a Dirichlet-type
principle \cite[Proposition II.6.1]{Spohn2012IPS}, derived in a similar
manner to equation (\ref{eq:dirichlet}). Hence, it will be interesting
to see if an analysis in the spirit of Section \ref{sec:flows_and_proof}
can apply.

\medskip

The paper proceeds as follows. We start with a list of notation (we
will stay close throughout to the definitions and notation of \cite{Spohn2012IPS}).
Then Section \ref{sec:defs_and_thm} will present important definitions
and results. Section \ref{sec:flows_and_proof} is dedicated to the
proof of the main result, Theorem \ref{thm:thomson}. Finally, in
Section \ref{sec:BT} we apply Theorem \ref{thm:thomson} in three
different ways to the Bertini-Toninelli model.

\section{Notation}

These are some of the notation that will be introduced in the paper,
listed here for the reader's convenience.
\begin{itemize}
\item The model is defined on $\Z^{d}$. We denote $x\cdot y$ the standard
scalar product for $x,y\in\Z^{d}$, and $\norm x=\sqrt{x\cdot x}$
.
\item The configuration space is $\Omega=\{0,1\}^{\Z^{d}}$. The measure
$\mu$ is given by a product of $\text{Ber}(p)$ for some fixed $p\in(0,1)$.
We denote $q=1-p$.
\item For $\eta\in\Omega,$ $\eta^{x,y}$ is the configuration obtained
by exchanging the occupation values at $x$ and $y$.
\item $\overline{\eta}=1-\eta$.
\item $\tau_{x}$ is the translation by $x$: for $\eta\in\Omega$, 
\begin{itemize}
\item $\tau_{x}\eta(y)=\eta(y-x)$;
\item $\tau_{x}(\eta^{yz})=(\tau_{x}\eta)^{y+x,z+x}$;
\item $\tau_{x}f(\eta)=f(\tau_{x}\eta)$.
\end{itemize}
\item $c(x,y;\eta)$ are positive transition rates for $x,y\in\Z^{d}$ and
$\eta\in\Omega.$ By convention $c(x,y;\eta)=c(y,x;\eta)$.
\item $\cL$ is the infinitesimal generator of the process with jump rates
$c(x,y;\eta)$.
\item $D$ is the diffusion coefficient. We study the product $l\cdot Dl$,
where $l$ denotes a generic vector in $\R^{d}$.
\item The inner product of two functions is given by 
\[
\left\langle f,g\right\rangle =\sum_{x\in\Z^{d}}\left(\mu\left(f^{*}\tau_{x}g)\right)-\mu(f^{*})\mu(g)\right).
\]
It is defined over the space $\cH_{1}$.
\item For $\phi,\psi$ admissible flows, the inner product is given by 
\[
\left\langle \phi,\psi\right\rangle =\frac{1}{4}\sum_{x,y,z\in\Z^{d}}\mu\left[c(y,z;\eta)^{-1}\phi(\eta,\eta^{y,z})^{*}\,\tau_{x}\psi(\eta,\eta^{y,z})\right].
\]
It is defined over the space $\cH_{2}$.
\item $V_{0}$ is the space of admissible flows $\phi$ such that $\left\langle \div\phi,\div\phi\right\rangle =0$.
\end{itemize}

\section{\label{sec:defs_and_thm}Definitions and main results}

We will consider exclusion processes, on the state space $\Omega=\{0,1\}^{\Z^{d}}$.
The evolution of the process is described by particle jumps with rates
given by a function $c:\Z^{d}\times\Z^{d}\times\Omega\to[0,\infty)$.
That is, for $\eta\in\Omega$ and $x,y\in\Z^{d}$, the occupation
values $\eta(x)$ and $\eta(y)$ are exchanged at rate $c(x,y;\eta)$.
We assume that the jump range is finite, i.e., $c(x,y;\eta)=0$ if
$\norm{x-y}$ is larger than some finite range, and translation invariant,
meaning that $c(x,y;\eta)=c(x+z,y+z;\tau_{z}\eta)$ for any $z\in\Z^{d}$,
where $\tau_{z}\eta$ is the translated configuration:
\[
(\tau_{z}\eta)(x)=\eta(x-z).
\]

The process can be constructed via its infinitesimal generator, acting
on a local function $f:\Omega\to\C$ as 
\begin{equation}
\cL f(\eta)=\frac{1}{2}\sum_{x,y\in\Z}c(x,y;\eta)\left(f(\eta^{x,y})-f(\eta)\right),
\end{equation}
where $\eta^{x,y}$ is the configuration obtained from $\eta$ after
exchanging $x,y$:
\[
\eta^{x,y}(z)=\begin{cases}
\eta(z) & \text{if }z\notin\{x,y\},\\
\eta(y) & \text{if }z=x,\\
\eta(x) & \text{if }z=y.
\end{cases}
\]
The factor $\frac{1}{2}$ comes form the double counting of the pair
$(x,y)$ and $(y,x)$; we assume by convention $c(x,y;\eta)=c(y,x;\eta)$.

We assume that $\cL$ is reversible with respect the measure $\mu$,
given by an IID product of $\text{Ber}(p)$ random variables, for
a fixed parameter $p\in(0,1)$; we also denote $q=1-p$. This is the
same as saying that $c(x,y;\eta)=c(x,y;\eta^{x,y})$ for all $\eta\in\Omega$
and $x,y\in\Z^{d}$.

Our object of interest is the diffusion coefficient, which is the
symmetric $d\times d$ matrix given by the Green-Kubo formula \cite[Proposition II.2.1]{Spohn2012IPS}
\begin{equation}
l\cdot Dl=\frac{1}{\chi}\left(\sum_{x}\mu\left[c(0,x;\eta)(\eta(0)-\eta(x))^{2}\right](l\cdot x)^{2}-\int_{0}^{\infty}dt\sum_{x}\mu\left[\tau_{x}j_{l}\ e^{t\cL}j_{l}\right]\right),\qquad l\in\R^{d},\label{eq:green-kubo}
\end{equation}
where $j_{l}$ is the current at the origin that we define later on
(Definition \ref{def:phil_jl}), and $\chi=pq$ is the compressibility.

In order to formulate the result, we need to consider flows on $\Omega$:
\begin{defn}
A \emph{flow} is an antisymmetric function $\phi:\Omega^{2}\to\C$,
i.e., $\phi(\eta,\eta')=-\phi(\eta',\eta)$ for all $\eta,\eta'\in\Omega$.
We say that a flow is \emph{admissible} if $\phi(\eta,\eta')=0$ unless
$\eta'=\eta^{x,y}$ for a pair $x,y\in\Z^{d}$ such that $c(x,y;\eta)\neq0$.
\end{defn}

Next, we define the gradient of a function and divergence of a flow.
\begin{defn}
For a function $f:\Omega\to\R$, we define the gradient to be the
admissible flow given by
\begin{equation}
\grad f(\eta,\eta^{x,y})=c(x,y;\eta)\ \left(f(\eta^{x,y})-f(\eta)\right),\qquad\eta\in\Omega,\ x,y\in\Z^{d}.
\end{equation}
In this formula, and the ones to follow, it is implicit that $\grad f(\eta,\eta')=0$
elsewhere (i.e., if there are no $x,y\in\Z^{d}$ such that $\eta'=\eta^{x,y}$).

For a flow $\phi:\Omega^{2}\to\R$, the divergence is a function on
$\Omega$ given by
\begin{equation}
\div\phi(\eta)=\sum_{\eta'}\phi(\eta,\eta'),\label{eq:def_div}
\end{equation}
when the sum is well-defined.
\end{defn}

A direct application of this definition yields:
\begin{observation}
Let $f:\Omega\to\R$ be a local function. Then $\div\grad f$ is well
defined, and given by $\cL f$.
\end{observation}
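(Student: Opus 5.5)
The plan is to unwind the definitions and compare term by term with the generator. Fix a local function $f$ and a configuration $\eta$. By the definition of divergence, $\div\grad f(\eta)=\sum_{\eta'}\grad f(\eta,\eta')$. The gradient vanishes unless $\eta'=\eta^{x,y}$ for some pair with $c(x,y;\eta)\neq0$, so the sum runs only over configurations of this form.

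The first step is well-definedness, and it is here that locality and the finite range are used. Let $\Lambda$ be a finite set supporting $f$, and let $R$ be the jump range. If $\{x,y\}\cap\Lambda=\emptyset$, then $\eta^{x,y}$ agrees with $\eta$ on $\Lambda$, so $f(\eta^{x,y})-f(\eta)=0$. If $\norm{x-y}>R$, then $c(x,y;\eta)=0$. Hence only pairs with one endpoint in $\Lambda$ and the other within distance $R$ of it contribute. There are finitely many such pairs, so only finitely many $\eta'$ give a nonzero term, and the sum is a finite sum.

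The second step is to pass from a sum over configurations to a sum over pairs. Two points require care.
\begin{itemize}
\item A single $\eta'$ may arise from several pairs $\{x,y\}$. This happens exactly when $\eta(x)=\eta(y)$, and then $\eta'=\eta$. In that case $f(\eta^{x,y})-f(\eta)=0$, so such pairs contribute nothing whichever way they are counted. We may therefore set the value at $\eta'=\eta$ to be $0$.
\item When $\eta(x)\neq\eta(y)$, the configuration $\eta^{x,y}$ determines the unordered pair $\{x,y\}$ uniquely, since $\{x,y\}$ is the set where $\eta^{x,y}$ and $\eta$ differ.
\end{itemize}
It follows that
\[
\div\grad f(\eta)=\sum_{\{x,y\}}c(x,y;\eta)\bigl(f(\eta^{x,y})-f(\eta)\bigr)=\frac12\sum_{x,y}c(x,y;\eta)\bigl(f(\eta^{x,y})-f(\eta)\bigr).
\]
The second equality uses the symmetry $c(x,y;\eta)=c(y,x;\eta)$ and $\eta^{x,y}=\eta^{y,x}$, which account for the factor $\tfrac12$. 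The right-hand side is $\cL f(\eta)$.

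The only subtle point is the multiplicity issue: the formula for $\grad f(\eta,\eta^{x,y})$ is consistent, and the map $\eta'\leftrightarrow\{x,y\}$ is a bijection on nonzero terms. Both facts are settled by observing that $\eta^{x,y}=\eta$ exactly when the difference $f(\eta^{x,y})-f(\eta)$ vanishes trivially, and that the unordered pair is recovered from the set where $\eta$ and $\eta^{x,y}$ differ. Beyond this, the argument is routine bookkeeping.
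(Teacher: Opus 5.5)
Your proof is correct and follows the same route as the paper, which states the observation as ``a direct application of this definition''; the paper later uses your counting step, $\div\phi(\eta)=\frac{1}{2}\sum_{y,z}\phi(\eta,\eta^{y,z})$, in the proof of Lemma~\ref{lem:divgrad}. You make explicit two points the paper leaves implicit: the finiteness of the sum (from locality of $f$ and the finite jump range), and the fact that the only configuration reached from several pairs is $\eta'=\eta$, where $\grad f(\eta,\eta)=0$.
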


We now define the following function and flow, describing the particle
current at a given direction $l$:
\begin{defn}
\label{def:phil_jl}Let $l\in\R^{d}$. The flow of particle current
adjacent to $0$ is defined as
\begin{equation}
\phi_{l}(\eta,\eta^{0,z})=\frac{1}{2}(z\cdot l)\ c(0,z;\eta)\left(\eta(0)-\eta(z)\right),\qquad z\in\Z^{d},
\end{equation}
and the overall current adjacent to $0$ is the function 
\begin{equation}
j_{l}(\eta)=\frac{1}{2}\sum_{z\in\Z^{d}}(z\cdot l)\ c(0,z;\eta)\left(\eta(0)-\eta(z)\right).
\end{equation}
The factor $\frac{1}{2}$ is added due to the convention, that when
a particle jumps between $x$ and $y$ we associate half of the corresponding
current to $x$ and the other half to $y$.
\end{defn}

Our next definition consists of two (degenerate) inner products, on
functions and on flows, inspired by the second term of equation (\ref{eq:green-kubo}),
see also \cite[equation II.2.31]{Spohn2012IPS}.
\begin{defn}
For two local functions $f,g:\Omega\to\C$, define 
\begin{equation}
\left\langle f,g\right\rangle =\sum_{x\in\Z^{d}}\left(\mu(f^{*}\tau_{x}g)-\mu(f^{*})\mu(g)\right).
\end{equation}

For two local admissible flows $\phi,\psi$, define 
\begin{equation}
\left\langle \phi,\psi\right\rangle =\frac{1}{4}\sum_{x,y,z\in\Z^{d}}\mu\left[c(y,z;\eta)^{-1}\phi(\eta,\eta^{y,z})^{*}\,\tau_{x}\psi(\eta,\eta^{y,z})\right].
\end{equation}

Note that, thanks to the IID structure of $\mu$, these inner products
are indeed well defined on the spaces of local functions and local
admissible flows. In the rest of this section, just as in \cite[Proof of Proposition II.2.2 and equation II.2.155]{Spohn2012IPS},
we consider the Hilbert spaces $\cH_{1}$ and $\cH_{2}$ given by
(equivalence classes of) their completion.
\end{defn}

\begin{defn}
\label{def:V_0}$V_{0}$ is the space of admissible flows with vanishing
divergence, that we write formally as
\begin{equation}
V_{0}=\left\{ \phi\in\cH_{2}:\left\langle \div\phi,\div\phi\right\rangle =0\right\} .
\end{equation}
For $\phi\in\cH_{2}$, $\left\langle \div\phi,\div\phi\right\rangle =0$
should be understood by the completion under $\left\langle \cdot,\cdot\right\rangle $.
That is, there exists a sequence $(\phi_{n})_{n\in\N}$ of flows with
well defined divergence that converge in $\cH_{2}$ to $\phi$, such
that $\left\langle \div\phi_{n},\div\phi_{n}\right\rangle \xrightarrow{n\to\infty}0$.
\end{defn}

We are now ready to formulate the Thomson-type variational principle
for the diffusion coefficients.
\begin{thm}
\label{thm:thomson}Let $l\in\R^{d}$, and consider the diffusion
coefficient $D$ of the interacting particle system above. Then 
\begin{equation}
l\cdot Dl=\frac{1}{\chi}\sup_{\phi\in V_{0}}\left[2\left\langle \phi_{l},\phi\right\rangle -\left\langle \phi,\phi\right\rangle \right],\label{eq:thomson}
\end{equation}
where $\chi=pq$ is the compressibility.
\end{thm}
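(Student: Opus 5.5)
Our plan is to realize the Green–Kubo formula (\ref{eq:green-kubo}) as a squared distance in $\cH_{2}$ and then apply the projection theorem, in the spirit of Thomson's principle. We first set up the bridge between $\cH_{1}$ and $\cH_{2}$. For a local function $f$ and a local admissible flow $\psi$ we check the summation-by-parts identity
\begin{equation*}
\left\langle \grad f,\psi\right\rangle =-\frac{1}{2}\left\langle f,\div\psi\right\rangle ,
\end{equation*}
up to the exact normalizing constant, which we fix by direct computation. The proof uses translation invariance to shift the sum over $x$, reversibility $c(y,z;\eta)=c(y,z;\eta^{y,z})$ together with the change of variables $\eta\mapsto\eta^{y,z}$, which preserves $\mu$, and antisymmetry of $\psi$. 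The factor $c^{-1}$ in the flow inner product cancels the $c$ in $\grad f$, so the identity is well defined.

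Two special cases follow. Taking $\psi=\grad g$ gives $\left\langle \grad f,\grad g\right\rangle =-\tfrac12\left\langle f,\cL g\right\rangle$. Taking $\psi=\phi_{l}$ gives $\div\phi_{l}=j_{l}$ after the shift sum, up to the translation bookkeeping, so $\left\langle \grad f,\phi_{l}\right\rangle =-\tfrac12\left\langle f,j_{l}\right\rangle$. We also check that $\left\langle \phi_{l},\phi_{l}\right\rangle $ equals the static term $\sum_{x}\mu[c(0,x;\eta)(\eta(0)-\eta(x))^{2}](l\cdot x)^{2}$, again up to the same constant. This is a direct computation using that $\phi_{l}$ is supported on edges through $0$.

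Next we rewrite the dynamic term variationally. By the spectral theorem for the nonpositive self-adjoint operator $\cL$ on $\cH_{1}$, $\int_{0}^{\infty}\left\langle j_{l},e^{t\cL}j_{l}\right\rangle dt=\left\langle j_{l},(-\cL)^{-1}j_{l}\right\rangle $. Using the identities above, this equals $\sup_{f}\left[2\left\langle \grad f,\phi_{l}\right\rangle -\left\langle \grad f,\grad f\right\rangle \right]$, possibly up to a constant. Combining with the static term, we obtain
\begin{equation*}
\chi\,l\cdot Dl=\inf_{f}\left\langle \phi_{l}-\grad f,\phi_{l}-\grad f\right\rangle ,
\end{equation*}
which is the Dirichlet form (\ref{eq:dirichlet}). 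This is the squared distance in $\cH_{2}$ from $\phi_{l}$ to $G=\overline{\{\grad f:f\text{ local}\}}$, i.e.\ $\norm{P_{G^{\perp}}\phi_{l}}^{2}$. We may take this step from Spohn's Proposition II.2.2 after matching normalizations.

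The key step, and the main obstacle, is to identify $G^{\perp}=V_{0}$ in $\cH_{2}$. The inclusion $V_{0}\subset G^{\perp}$ follows by passing to the limit in the summation-by-parts identity. If $\phi_{n}\to\phi$ with $\left\langle \div\phi_{n},\div\phi_{n}\right\rangle \to0$, then $|\left\langle \grad f,\phi_{n}\right\rangle |\le\tfrac12\left\langle f,f\right\rangle ^{1/2}\left\langle \div\phi_{n},\div\phi_{n}\right\rangle ^{1/2}\to0$ by Cauchy–Schwarz in $\cH_{1}$. For the reverse inclusion, given $\phi\perp G$ we must produce local flows $\phi_{n}\to\phi$ whose divergences vanish asymptotically in $\cH_{1}$. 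We would try to define the divergence as a densely defined closable operator $\div:\cH_{2}\to\cH_{1}$ whose adjoint is $-2\grad$. Then $G^{\perp}=\ker(\div^{*})^{*}=\ker\overline{\div}$, and closedness yields exactly the approximating sequence required in Definition~\ref{def:V_0}. The delicate points are the degeneracy of the inner products, since we work with equivalence classes and locality must interact with the sum over translations, and density of local flows. If closability is hard to verify directly, we would instead decompose $\phi=\phi_{G}+\phi^{\perp}$ and approximate $\phi_{G}$ by gradients.

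Once $G^{\perp}=V_{0}$ is established, we finish with the projection theorem:
\begin{equation*}
\norm{P_{V_{0}}\phi_{l}}^{2}=\sup_{\phi\in V_{0}}\left[2\left\langle \phi_{l},\phi\right\rangle -\left\langle \phi,\phi\right\rangle \right],
\end{equation*}
since completing the square gives $2\langle\phi_{l},\phi\rangle-\langle\phi,\phi\rangle=\norm{P_{V_{0}}\phi_{l}}^{2}-\norm{P_{V_{0}}\phi_{l}-\phi}^{2}$. This yields (\ref{eq:thomson}).
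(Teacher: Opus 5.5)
Your framework is sound and gives one direction. From $\chi\,l\cdot Dl=\inf_f\langle\phi_l-\grad f,\phi_l-\grad f\rangle=\norm{P_{G^\perp}\phi_l}^2$ and the inclusion $V_0\subset G^\perp$ (your Cauchy--Schwarz argument), you get $\chi\,l\cdot Dl\ge\sup_{\phi\in V_0}[2\langle\phi_l,\phi\rangle-\langle\phi,\phi\rangle]$. The gap is the reverse inequality, which requires at least $P_{G^\perp}\phi_l\in V_0$, and your argument for it does not work as stated.

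Closability of $\div$ on local flows is not the issue; it is automatic. Indeed $\div^*$ contains $-\grad$ on local functions (with the paper's normalization, Lemma \ref{lem:divgrad}), and that operator is densely defined. But closability only gives $\ker\overline{\div}=(\operatorname{ran}\div^*)^\perp\subset G^\perp$, which is the inclusion you already have. The reverse inclusion needs $\div^*$ to be exactly the closure of $-\grad$ on local functions. In other words, a flow that is divergence-free in the weak sense (orthogonal to all gradients) must be approximable by flows whose divergence tends to $0$ in the $\cH_1$-norm. Asserting that the adjoint ``is $-2\grad$'' assumes precisely this. Your fallback has the same defect: if $\grad f_n\to P_G\phi_l$ in $\cH_2$, then $\div(\phi_l-\grad f_n)=j_l-\cL f_n$ tends to $0$ only when paired with a fixed local function, not in the norm required by Definition \ref{def:V_0}.

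The missing ingredient is an explicit approximate solution of the Poisson problem whose residual vanishes strongly; this is what the paper supplies. Take $f_T=-\int_0^T e^{t\cL}j_l\,\dd t$. Then $\cL f_T=j_l-e^{T\cL}j_l$, so $\div(\phi_l-\grad f_T)=e^{T\cL}j_l$. Moreover $\langle e^{T\cL}j_l,e^{T\cL}j_l\rangle=\langle j_l,e^{2T\cL}j_l\rangle\to0$, because $t\mapsto\langle j_l,e^{t\cL}j_l\rangle$ is nonnegative, nonincreasing and integrable (equivalently, $j_l$ has no spectral mass at $0$). Also $\grad f_T$ is Cauchy in $\cH_2$, with $\langle\grad f_T,\grad f_T\rangle=-\langle f_T,\cL f_T\rangle\to\int_0^\infty\langle j_l,e^{t\cL}j_l\rangle\,\dd t$. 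Hence $\phi_l-\lim_{T}\grad f_T$ lies in $V_0$, and the functional evaluated on it equals $\langle\phi_l,\phi_l\rangle-\int_0^\infty\langle j_l,e^{t\cL}j_l\rangle\,\dd t=\chi\,l\cdot Dl$. A resolvent version, $(\lambda-\cL)f_\lambda=-j_l$ with $\lambda\downarrow0$, works equally well. You never need the full identity $G^\perp=V_0$: this single element suffices, and with it inserted your projection argument closes.
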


It is sometimes convenient to optimize, for a given $\phi\in V_{0}$,
over flows of the form $\lambda\phi$ for $\lambda\in\R$. A quick
calculation yields the following corollary:
\begin{cor}
\label{cor:optimize_flow}Let $\phi\in V_{0}$ with $\left\langle \phi,\phi\right\rangle \neq0$.
Then 
\begin{equation}
l\cdot Dl\ge\frac{1}{\chi}\ \frac{\left\langle \phi_{l},\phi\right\rangle ^{2}}{\left\langle \phi,\phi\right\rangle }.
\end{equation}
\end{cor}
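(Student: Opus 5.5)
The statement to prove is Corollary \ref{cor:optimize_flow}. The plan is to apply Theorem \ref{thm:thomson} to the one-parameter family $\lambda\phi$ and optimize over $\lambda$.

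First, I check that $\lambda\phi\in V_{0}$ for every $\lambda\in\R$. If $\phi_{n}\to\phi$ in $\cH_{2}$ with $\left\langle \div\phi_{n},\div\phi_{n}\right\rangle \to0$, then $\lambda\phi_{n}\to\lambda\phi$. Since divergence is linear, $\left\langle \div\lambda\phi_{n},\div\lambda\phi_{n}\right\rangle =\lambda^{2}\left\langle \div\phi_{n},\div\phi_{n}\right\rangle \to0$. So $V_{0}$ is closed under real scalar multiplication.

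By Theorem \ref{thm:thomson}, for every real $\lambda$,
\[
\chi\, l\cdot Dl\ \ge\ 2\lambda\left\langle \phi_{l},\phi\right\rangle -\lambda^{2}\left\langle \phi,\phi\right\rangle .
\]
Here $\left\langle \phi,\phi\right\rangle $ is real and nonnegative, being a semi-inner product, and by assumption it is nonzero, hence strictly positive.

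The only point needing care is whether $\left\langle \phi_{l},\phi\right\rangle $ is real. The supremum in Theorem \ref{thm:thomson} produces a real number, which implicitly requires this, and in the intended setting the flows are real-valued. If complex flows were allowed, I would take the complex scalar $\lambda=\overline{\left\langle \phi_{l},\phi\right\rangle }/\left\langle \phi,\phi\right\rangle $ instead, reading the functional as $2\operatorname{Re}\left\langle \phi_{l},\phi\right\rangle -\left\langle \phi,\phi\right\rangle $. This yields $\left|\left\langle \phi_{l},\phi\right\rangle \right|^{2}$ in place of the square. So I treat the real case and note this adaptation.

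In the real case, the right-hand side is a concave quadratic in $\lambda$. It is maximized at $\lambda^{*}=\left\langle \phi_{l},\phi\right\rangle /\left\langle \phi,\phi\right\rangle $. Substituting gives
\[
2\lambda^{*}\left\langle \phi_{l},\phi\right\rangle -(\lambda^{*})^{2}\left\langle \phi,\phi\right\rangle =\frac{\left\langle \phi_{l},\phi\right\rangle ^{2}}{\left\langle \phi,\phi\right\rangle }.
\]
Dividing by $\chi$ then gives the claimed bound $l\cdot Dl\ge\frac{1}{\chi}\,\frac{\left\langle \phi_{l},\phi\right\rangle ^{2}}{\left\langle \phi,\phi\right\rangle }$.
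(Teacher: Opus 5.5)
Your proposal is correct and follows the same route as the paper: you apply Theorem \ref{thm:thomson} to the test flows $\lambda\phi\in V_{0}$ and maximize the resulting concave quadratic at $\lambda^{*}=\left\langle \phi_{l},\phi\right\rangle /\left\langle \phi,\phi\right\rangle $, which is exactly the ``quick calculation'' the paper alludes to. Your extra checks (closure of $V_{0}$ under scaling, positivity of $\left\langle \phi,\phi\right\rangle $, e.g.\ via equation (\ref{eq:phi_phi}), and realness of $\left\langle \phi_{l},\phi\right\rangle $ for real flows) simply fill in details the paper leaves implicit.
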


\begin{rem}
We assume $\mu$ to be an IID product measure on $\{0,1\}^{\Z^{d}}$
for simplicity of the presentation; there is however no major obstacle
in proving the result for more general reversible measures and more
general state spaces.

One technicality that should be addressed is that in order for the
different objects we use to be well defined we must require strong
enough spatial mixing properties of $\mu$.

The second issue arises from the fact that if $c(x,y;\eta)\neq c(x,y;\eta^{x,y})$
then the gradient of a function is no longer antisymmetric. In order
to rectify this problem one must redefine a flow with ``tilted''
antisymmetry, written formally as $\mu(\eta)\ \phi(\eta,\eta')=-\mu(\eta')\ \phi(\eta',\eta)$.
Under this definition, $\grad f$ is indeed a flow by reversibility.
This modification will change the proof of Lemma \ref{lem:divgrad},
where we use the antisymmetry of $\phi$, but the additional factor
introduced will be canceled by the Radon-Nikodym derivative under
the change of variable $\eta\mapsto\eta^{yz}$.
\end{rem}

\begin{rem}
\label{rem:dirichlet}One may compare Theorem \ref{thm:thomson} with
the standard, ``Dirichlet-type'', variational principle for the
diffusion coefficient \cite[equations II.2.33 and II.2.34]{Spohn2012IPS},
which can be written (see equation (\ref{eq:phil_phil})) as 
\begin{multline}
l\cdot Dl=\frac{1}{\chi}\left[\left\langle \phi_{l},\phi_{l}\right\rangle +\inf_{f}\left\{ -\left\langle f,\cL f\right\rangle +2\left\langle f,j_{l}\right\rangle \right\} \right]\\
=\frac{1}{\chi}\inf_{f}\left\{ \frac{1}{4}\sum_{z\in\Z^{d}}\mu\left[c(0,z;\eta)\left((l\cdot z)(\eta(0)-\eta(z))+\sum_{x}\left(\tau_{x}f(\eta^{0,z})-\tau_{x}f(\eta)\right)\right)\right]^{2}\right\} ,\label{eq:dirichlet}
\end{multline}
with the infimum taken over all local functions $f$. We also note
here that, formally, the infimum is attained at $f$ solving the Poisson
problem (\ref{eq:poisson_problem}), so $\inf_{f}\left\{ -\left\langle f,\cL f\right\rangle +2\left\langle f,j_{l}\right\rangle \right\} =\left\langle j_{l},\cL^{-1}j_{l}\right\rangle $.
\end{rem}

\begin{rem}
\label{rem:gradient}In the case of a gradient model, the current
can be expressed as a gradient of a function. In particular, $\sum_{x}\tau_{x}\div\phi_{l}=\sum_{x}\tau_{x}j_{l}=0$,
hence $\phi_{l}\in V_{0}$. Then the lower bound obtained by taking
the test flow $\phi=\phi_{l}$ in equation (\ref{eq:thomson}) coincides
with the upper bound given by (\ref{eq:dirichlet}) with the test
function $f\equiv0$. It is shown in \cite{Sasada18GreenKubo} that
this condition is also necessary, i.e., if $\phi_{l}\in V_{0}$ for
all \textbf{$l$}, then $l\cdot Dl=\frac{1}{\chi}\left\langle \phi_{l},\phi_{l}\right\rangle $
for all $l$, and the model must be gradient.
\end{rem}

The last part of the paper is dedicated to the application of Theorem
\ref{thm:thomson} to a specific model, introduced by Bertini and
Toninelli in \cite{BertiniToninelli}. We discuss the one dimensional
setting, where the diffusion coefficient is a non-negative real number.
While the main purpose of this part is to demonstrate how to construct
flows to be used in equation (\ref{eq:thomson}), the result also
improves on previously known bounds, illuminating qualitative properties
of the model (see Remark \ref{rem:BT_qualitative}).
\begin{thm}
\label{thm:BT}Let $D$ be the diffusion coefficient of the Bertini-Toninelli
model, defined via the transition rates given in equation (\ref{eq:c_BT}).
Then the following lower bounds hold:
\begin{eqnarray*}
D & \ge & \frac{2q}{1+q},\\
D & \ge & \frac{9q^{2}}{7-4q+6q^{2}},\\
D & \ge & \frac{1}{5000}pq^{9}.
\end{eqnarray*}
In particular $D\neq0$.
\end{thm}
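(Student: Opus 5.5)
The plan is to obtain all three bounds from Corollary \ref{cor:optimize_flow} (equivalently Theorem \ref{thm:thomson}). For each bound we exhibit an explicit test flow $\phi\in V_{0}$ and compute the two quantities $\left\langle \phi_{l},\phi\right\rangle$ and $\left\langle \phi,\phi\right\rangle$ exactly under the product measure $\mu$. We work in $d=1$ with $l=1$. I take the rates in (\ref{eq:c_BT}) to be the usual Bertini--Toninelli ones: $c(x,x+1;\eta)=1-\eta(x-1)\eta(x+2)$, so an exchange across a bond is allowed iff at least one of the two outer neighbours is empty. These rates take values in $\{0,1\}$, so the weight $c^{-1}$ in the flow inner product is $1$ on every admissible edge.

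The first step is a usable sufficient criterion for membership in $V_{0}$. The inner product on functions only sees $\sum_{x}\tau_{x}g$, so $\left\langle g,g\right\rangle =0$ holds whenever $g=h-\tau_{1}h$ for a local $h$ with $\mu(h)=0$. Hence it suffices to build local admissible flows whose divergence telescopes in this way. The natural source of such flows is the following. Take a short sequence of admissible exchanges that carries a local pattern from position $x$ to position $x+1$; for instance, a particle together with an assisting vacancy is transported one step, so that the final configuration is the translate of the initial pattern. The unit flow along this path has divergence $\One_{A}-\One_{\tau_{1}A}$, which telescopes. Summing such paths over the relevant local patterns, with coefficients to be optimized, keeps us in $V_{0}$. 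For unbounded paths, membership in $V_{0}$ follows from the closure in $\cH_{2}$ (Definition \ref{def:V_0}) by truncating and checking that $\left\langle \div\phi_{n},\div\phi_{n}\right\rangle \to0$.

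Next come the three constructions, in increasing complexity.

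\emph{First bound.} Use the shortest paths: a particle hops across the bond $(0,1)$ using a vacancy at $2$ or at $-1$, completed by one or two further moves that return the pattern to its translate. Symmetrizing over left- and right-assisted moves and averaging over $\eta$, the overlap with $\phi_{l}$ and the self-energy become low-degree polynomials in $p,q$. The self-energy picks up a factor $(1+q)$ from the double counting of bonds where both outer sites are empty. The ratio $\left\langle \phi_{l},\phi\right\rangle ^{2}/\chi\left\langle \phi,\phi\right\rangle$ should then give $2q/(1+q)$.

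\emph{Second bound.} Enlarge the family to include paths using two vacancies or longer detours, with free coefficients. We maximize the quadratic form of Theorem \ref{thm:thomson} over these coefficients, which reduces to a small linear system. This should produce the rational expression $9q^{2}/(7-4q+6q^{2})$.

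\emph{Third bound.} This is the robust one: it must hold even as $q\to0$ and also cover the regime where the local flows above become too degenerate. Here I would use unbounded paths: a vacancy at distance $k$ is walked toward the origin, assists the jump, and is walked back. The contributions are weighted by a geometrically decaying factor in $k$ to keep $\left\langle \phi,\phi\right\rangle$ finite, and the resulting series are bounded crudely, which is where the constant $1/5000$ and the power $q^{9}$ come from.

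The main obstacle is the third construction. One must check admissibility of every move along a long vacancy path, since the constraint can block moves next to particle clusters; to handle this I would first move a pair of adjacent vacancies, which is always mobile. One must then control the cross terms $\tau_{x}$ in $\left\langle \phi,\phi\right\rangle$, where translated paths overlap. I would bound those overlaps by Cauchy--Schwarz together with a counting of how many paths use a given edge. The conclusion $D\neq0$ is then immediate from any of the three bounds, since each is strictly positive for $q>0$.
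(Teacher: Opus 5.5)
Your framework is right: you use Corollary~\ref{cor:optimize_flow}, you note that $c\in\{0,1\}$ so $c^{-1}=1$ on admissible edges, and you get membership in $V_0$ from a divergence whose translates sum to zero. But the proposal never fixes a test flow and never computes $\langle\phi_l,\phi\rangle$ or $\langle\phi,\phi\rangle$. All three constants are only asserted (``should give'', ``should produce''), so nothing is proved.

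The missing idea for the first bound is that no pattern-transporting detour is needed. The Bertini--Toninelli rates dominate those of the gradient $+$-model, $c^{+}(x,x+1;\eta)=\frac12\big(\overline{\eta}(x-1)+\overline{\eta}(x+2)\big)\le c(x,x+1;\eta)$. Hence the $+$-model current $\phi(\eta,\eta^{0,\pm1})=\pm\frac12c^{+}(0,\pm1;\eta)(\eta(0)-\eta(\pm1))$ is a single-edge admissible flow. It lies in $V_0$ because the $+$-model is gradient (Remark~\ref{rem:gradient}). Claim~\ref{claim:BT_formulas} then gives $\langle\phi,\phi\rangle=\frac12\mu\big[c^{+}(0,1;\eta)^2(\eta(0)-\eta(1))^2\big]=\frac12pq^2(1+q)$ and $\langle\phi_l,\phi\rangle=pq^2$, hence $2q/(1+q)$. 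Your ``one or two further moves'' would change both inner products, and nothing in your sketch shows the ratio would still be $2q/(1+q)$.

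More generally, the paper never obtains $V_0$-membership by carrying a pattern to its translate. Each flow is a superposition of paths from $\omega$ to $\omega^{0,1}$ (a single edge in the first case). Its divergence is therefore built from bond currents $c'(x,x+1;\eta)(\eta(x)-\eta(x+1))$ of an auxiliary rate $c'$. It lies in $V_0$ precisely because $c'$ is a gradient rate: $c^{+}$, $c^{2}$, or simple exclusion.

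Your other two plans do not lead to the stated numbers either.

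\emph{Second bound.} If you enlarge a family containing your first flow and optimize the coefficients, Theorem~\ref{thm:thomson} gives at least $2q/(1+q)$. Since $\frac{2q}{1+q}-\frac{9q^2}{7-4q+6q^2}=\frac{q(1-q)(14-3q)}{(1+q)(7-4q+6q^2)}>0$ on $(0,1)$, that optimization cannot ``produce'' $9q^2/(7-4q+6q^2)$. In the paper this value comes from one fixed flow: the current of the gradient model $c^{2}$. Its term $\overline{\eta}(x-3)\overline{\eta}(x-2)$, which is not dominated by $c$, is routed along the three-step path (\ref{eq:finite_path}). This gives $\langle\psi,\psi\rangle=4pq^3(7-4q+6q^2)$ and $\langle\phi_l,\psi\rangle=6pq^3$.

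\emph{Third bound.} Weighting the vacancy-pair paths by a fixed geometric factor $w_k$ breaks membership in $V_0$, unless you renormalize configuration by configuration so that each $\omega$ sends total flow exactly $\omega(0)-\omega(1)$. Without that, the divergence is $2\sum_k w_k\overline{\eta}(k)\overline{\eta}(k+1)(\eta(0)-\eta(1))$. This is the current of a non-gradient rate: in the orthonormal basis of $\mu$, the translation class of pairs at distance $m$ has coefficient sum proportional to $w_{m+1}-w_{m-1}\neq0$. The paper instead routes each $\omega$ only through the \emph{first} empty pair $X(\omega)$, so $\div\theta=2(\eta(0)-\eta(1))$ is the simple-exclusion current. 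Finiteness of $\langle\theta,\theta\rangle$ then comes from the geometric tails of $X_4-X_-$ under $\mu$. These combine with the uniqueness property in Lemma~\ref{lem:path}, which gives $|\sum_x\tau_x\theta(\eta,\eta^{0,1})|\le4(X_4-X_-+1)^2$. The value $\langle\phi_l,\theta\rangle=2pq$ uses the cancellation between the paths of $\omega$ and $\omega^{0,1}$ on bonds $y\ge2$.

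Finally, $2q/(1+q)$ dominates both other bounds on $(0,1]$. So a rigorous, explicit version of your first construction alone would already prove the theorem as stated.
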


\begin{rem}
The first bound is the best of the three for all values of $q$; the
main reason we give the two others is that their proofs involve techniques
that could be useful to other models. At the same time, the three
test flows we construct can all be used together to obtain a quantitive
bound strictly better than $\frac{2q}{1+q}$. This is explained in
Remark \ref{rem:combined_flow}, but since computations become quickly
cumbersome we will only see an explicit expression at $q=1/2$, improving
$D\ge\frac{2}{3}$ of Theorem \ref{thm:BT} with the (slightly better)
$D\ge0.691$.
\end{rem}

\section{\label{sec:flows_and_proof}Flows and proof of Theorem \ref{thm:thomson}}
\begin{lem}
For local $f:\Omega\to\C$,
\begin{equation}
\left\langle f,f\right\rangle =\lim_{N\to\infty}\frac{1}{\left|\Lambda_{N}\right|}\mu\left(\left|\sum_{x\in\Lambda_{N}}\left(\tau_{x}f-\mu(f)\right)\right|^{2}\right),
\end{equation}
where $\Lambda_{N}=\left([-N,N]\cap\Z\right)^{d}$.

For a local admissible flow $\phi$,
\begin{equation}
\left\langle \phi,\phi\right\rangle =\frac{1}{4}\sum_{z\in\Z^{d}}\mu\left[c(0,z;\eta)^{-1}\left|\sum_{x\in\Z^{d}}\tau_{x}\phi(\eta,\eta^{0,z})\right|^{2}\right].\label{eq:phi_phi}
\end{equation}
By completion both formulas hold for any $f\in\cH_{1}$ and $\phi\in\cH_{2}$.
\end{lem}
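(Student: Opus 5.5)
For the first formula, I will expand the square and use translation invariance of $\mu$. Write $\bar f=f-\mu(f)$. Then
\[
\frac{1}{|\Lambda_N|}\mu\Bigl(\Bigl|\sum_{x\in\Lambda_N}\tau_x\bar f\Bigr|^2\Bigr)=\frac{1}{|\Lambda_N|}\sum_{x,y\in\Lambda_N}\mu\bigl(\tau_x\bar f^{*}\,\tau_y\bar f\bigr)=\sum_{z\in\Z^d}\frac{|\Lambda_N\cap(\Lambda_N-z)|}{|\Lambda_N|}\,\mu\bigl(\bar f^{*}\tau_z\bar f\bigr),
\]
where the last step uses the invariance of $\mu$ under $\tau_{-x}$ and sets $z=y-x$. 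Since $f$ is local with support in some finite box and $\mu$ is a product measure, $\mu(\bar f^*\tau_z\bar f)=\mu(f^*\tau_zf)-\mu(f^*)\mu(f)$ vanishes unless $z$ lies in a finite set. The sum is therefore finite. Each ratio $|\Lambda_N\cap(\Lambda_N-z)|/|\Lambda_N|$ tends to $1$, which gives $\langle f,f\rangle$ in the limit.

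For the flow formula, the key observation is that the sum over $x$ in the definition of $\langle\phi,\phi\rangle$ can be moved onto both factors by using translation invariance. Start from
\[
\langle\phi,\phi\rangle=\tfrac14\sum_{x,y,z}\mu\bigl[c(y,z;\eta)^{-1}\phi(\eta,\eta^{y,z})^*\,\tau_x\phi(\eta,\eta^{y,z})\bigr].
\]
For fixed $y$, substitute $\eta=\tau_y\xi$, which preserves $\mu$. Writing $z=y+w$, the rate becomes $c(y,y+w;\tau_y\xi)=c(0,w;\xi)$ by translation invariance of the rates. The convention $\tau_x(\eta^{yz})=(\tau_x\eta)^{y+x,z+x}$ then turns $\phi(\tau_y\xi,(\tau_y\xi)^{y,y+w})$ into $\tau_y\phi(\xi,\xi^{0,w})$, and $\tau_x\phi(\tau_y\xi,\dots)$ into $\tau_{x+y}\phi(\xi,\xi^{0,w})$. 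After reindexing $x\mapsto x+y$, we get
\[
\tfrac14\sum_{w}\mu\Bigl[c(0,w;\xi)^{-1}\Bigl(\sum_{y}\tau_y\phi(\xi,\xi^{0,w})\Bigr)^{*}\Bigl(\sum_{x'}\tau_{x'}\phi(\xi,\xi^{0,w})\Bigr)\Bigr],
\]
which is exactly \eqref{eq:phi_phi}.

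Locality of $\phi$ makes both inner sums finite for each fixed $w$, because only translates whose support meets $\{0,w\}$ contribute. The finite jump range makes the sum over $w$ finite. This justifies all the interchanges. The term $c^{-1}$ appears only where $\phi\neq0$, and there $c\neq0$ by admissibility, so it is harmless.

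The extension to $\cH_1$ and $\cH_2$ is by density. Both sides are continuous, indeed equal, on the dense subspace of local objects, so the formulas persist for equivalence classes. On the left-hand sides this is by definition of the completion. On the right-hand sides, each formula defines a seminorm that agrees with the inner product on local elements, so it extends. The main point requiring care is the bookkeeping of the translations acting on flows in the change of variables, in particular checking that $\tau_x$ acting on both arguments of $\phi$ commutes correctly with the exchange $\eta\mapsto\eta^{y,z}$. I expect no genuine obstacle beyond that.
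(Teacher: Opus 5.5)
Your proposal is correct and follows essentially the paper's route: the first formula comes from translation invariance, finitely many nonvanishing correlations, and negligible boundary effects, and your pair-counting weights $|\Lambda_N\cap(\Lambda_N-z)|/|\Lambda_N|\to1$ are a cleaner version of the paper's boundary-term estimate. The flow formula comes from the same change of variables $\eta=\tau_y\xi$, using translation invariance of the rates and the reindexing $x\mapsto x+y$; like the paper, you treat the extension to $\cH_1,\cH_2$ only at the level of ``by completion''.
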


\begin{proof}
For the first equality, let $f$ be a local function, depending only
on the configuration in $\Lambda_{n}$ for some $n$, and assume without
loss of generality $\mu(f)=0$. We assume that $\mu$ is a product
measure, so $f$ is independent of $\tau_{x}f$ for $x\notin\Lambda_{n}$.
Then for large enough $N$,
\begin{eqnarray}
\left\langle f,f\right\rangle  & = & \sum_{x\in\Z^{d}}\mu(f^{*}\tau_{x}f)=\frac{1}{\left|\Lambda_{N}\right|}\sum_{y\in\Lambda_{N}}\sum_{x\in\Z^{d}}\mu(\tau_{y}f^{*}\ \tau_{x+y}f)=\frac{1}{\left|\Lambda_{N}\right|}\sum_{y\in\Lambda_{N}}\sum_{x\in\Lambda_{N+n}}\mu(\tau_{y}f^{*}\ \tau_{x}f)\\
 & = & \frac{1}{\left|\Lambda_{N}\right|}\mu\left(\left|\sum_{x}\tau_{x}f\right|^{2}\right)+\frac{1}{\left|\Lambda_{N}\right|}\sum_{y\in\Lambda_{N}}\sum_{x\in\Lambda_{N+n}\setminus\Lambda_{n}}\mu(\tau_{y}f^{*}\ \tau_{x}f).
\end{eqnarray}
In the second term, we can replace $\sum_{y\in\Lambda_{N}}\sum_{x\in\Lambda_{N+n}\setminus\Lambda_{n}}$
with $\sum_{x\in\Lambda_{N}\setminus\Lambda_{n}}\sum_{y\in x+\Lambda_{n}}$,
bounding it by $C_{f}\ \frac{\left|\Lambda_{N+n}\setminus\Lambda_{n}\right|}{\left|\Lambda_{N}\right|}$
which goes to $0$.

For the second part, let $\phi$ be a local admissible flow, depending
on the configuration in a box $\Lambda_{n}$. Then (using translation
invariance of $\mu$)
\begin{eqnarray*}
\left\langle \phi,\phi\right\rangle  & = & \frac{1}{4}\sum_{x,y,z\in\Z^{d}}\mu\left[c(y,z;\eta)^{-1}\phi(\eta,\eta^{y,z})^{*}\,\tau_{x}\phi(\eta,\eta^{y,z})\right]\\
 & = & \frac{1}{4}\sum_{x,y,z'\in\Z^{d}}\mu\left[c(0,z';\eta)^{-1}\phi(\tau_{y}\eta,\tau_{y}(\eta{}^{0,z'}))^{*}\,\tau_{x}\phi(\tau_{y}\eta,\tau_{y}(\eta{}^{0,z'}))\right]\\
 & = & \frac{1}{4}\sum_{x',y,z'\in\Z^{d}}\mu\left[c(0,z';\eta)^{-1}\tau_{y}\phi(\eta,\eta{}^{0,z'})^{*}\,\tau_{x'}\phi(\eta,\eta{}^{0,z'})\right],
\end{eqnarray*}
which concludes the proof. 
\end{proof}
\begin{claim}
Recall Definition \ref{def:phil_jl}. Then
\begin{equation}
j_{l}=\div\phi_{l},\label{eq:j_div_phi}
\end{equation}
\begin{equation}
\sum_{x}\tau_{x}\phi_{l}\ (\eta,\eta^{y,z})=((z-y)\cdot l)\ c(y,z;\eta)\left(\eta(y)-\eta(z)\right),\text{ and}
\end{equation}
\begin{equation}
\left\langle \phi_{l},\phi_{l}\right\rangle =\frac{1}{4}\sum_{z\in\Z^{d}}(z\cdot l)^{2}\mu\left[c(0,z;\eta)\left(\eta(0)-\eta(z)\right)^{2}\right].\label{eq:phil_phil}
\end{equation}
\end{claim}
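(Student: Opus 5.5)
The three identities are direct computations from the definitions, and I will verify them in order.

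For (\ref{eq:j_div_phi}), I expand $\div\phi_{l}(\eta)=\sum_{\eta'}\phi_{l}(\eta,\eta')$. Since $\phi_{l}$ is only nonzero on pairs of the form $(\eta,\eta^{0,z})$, the sum reduces to $\sum_{z}\phi_{l}(\eta,\eta^{0,z})$, which is $j_{l}(\eta)$ term by term.

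One point needs care. The definition of $\phi_{l}$ is stated on pairs $(\eta,\eta^{0,z})$, and antisymmetry extends it to $(\eta^{0,z},\eta)$. This extension is consistent: using $c(0,z;\eta^{0,z})=c(0,z;\eta)$ and swapping $\eta(0),\eta(z)$ flips the sign of $\eta(0)-\eta(z)$, so the formula itself is antisymmetric. There is also possible double counting when $\eta^{0,z}=\eta^{0,z'}$ for $z\neq z'$. This can only happen when $\eta^{0,z}=\eta$, that is $\eta(0)=\eta(z)$, and then the contribution vanishes. I will remark on this briefly.

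For the second identity, I compute $\tau_{x}\phi_{l}(\eta,\eta^{y,z})=\phi_{l}(\tau_{x}\eta,(\tau_{x}\eta)^{y+x,z+x})$. This is nonzero only if the transposition $\{y+x,z+x\}$ contains $0$, i.e. only for $x=-y$ or $x=-z$.
\begin{itemize}
\item For $x=-y$, I get $\frac12((z-y)\cdot l)\,c(0,z-y;\tau_{-y}\eta)(\eta(y)-\eta(z))$. By translation invariance of the rates, $c(0,z-y;\tau_{-y}\eta)=c(y,z;\eta)$.
\item For $x=-z$, I get $\frac12((y-z)\cdot l)\,c(0,y-z;\tau_{-z}\eta)(\eta(z)-\eta(y))$. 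This is the same quantity, using $c(x,y)=c(y,x)$ and translation invariance.
\end{itemize}
Summing the two cases gives the claimed formula. The degenerate case $y=z$ gives zero on both sides.

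For (\ref{eq:phil_phil}), I use formula (\ref{eq:phi_phi}) from the preceding lemma with $y=0$. The second identity then gives $\sum_{x}\tau_{x}\phi_{l}(\eta,\eta^{0,z})=(z\cdot l)\,c(0,z;\eta)(\eta(0)-\eta(z))$. Squaring and multiplying by $c(0,z;\eta)^{-1}$ leaves $(z\cdot l)^{2}c(0,z;\eta)(\eta(0)-\eta(z))^{2}$, as claimed. Where $c=0$ the flow vanishes, so the convention $0\cdot\infty=0$ inherent in the definition of the inner product on admissible flows applies.

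The only real obstacle is bookkeeping. I need to track the translation convention $\tau_{x}(\eta^{y,z})=(\tau_{x}\eta)^{y+x,z+x}$ correctly, so that the two contributing values of $x$ are identified with the right signs.
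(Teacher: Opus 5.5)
Your proposal is correct and follows essentially the same route as the paper. The first identity is read off from the definition of the divergence. The second comes from observing that only $x=-y$ and $x=-z$ contribute, then applying translation invariance and symmetry of the rates. The third follows by inserting this into formula (\ref{eq:phi_phi}). Your remarks on the antisymmetric extension, the absence of double counting, and the convention where $c=0$ are refinements the paper leaves implicit.
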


\begin{proof}
The first equation is just rewriting of the definition of $j_{l}$.
For the second equation,
\begin{eqnarray*}
\sum_{x}\tau_{x}\phi_{l}(\eta,\eta^{y,z}) & = & \sum_{x}\phi_{l}(\tau_{x}\eta,(\tau_{x}\eta)^{y+x,z+x})\\
 & = & \phi_{l}(\tau_{-y}\eta,(\tau_{-y}\eta)^{0,z-y})+\phi_{l}(\tau_{-z}\eta,(\tau_{-z}\eta)^{0,y-z})\\
 & = & \frac{1}{2}((z-y)\cdot l)\ c(0,z-y;\tau_{-y}\eta)\left(\tau_{-y}\eta(0)-\tau_{-y}\eta(z-y)\right)\\
 &  & \quad+\frac{1}{2}((y-z)\cdot l)\ c(0,y-z;\eta)\left(\tau_{-z}\eta(0)-\tau_{-z}\eta(y-z)\right)\\
 & = & \frac{1}{2}((z-y)\cdot l)\ c(y,z;\eta)\left(\eta(y)-\eta(z)\right)+\frac{1}{2}((y-z)\cdot l)\ c(z,y;\eta)\left(\eta(z)-\eta(y)\right)\\
 & = & ((z-y)\cdot l)\ c(y,z;\eta)\left(\eta(y)-\eta(z)\right).
\end{eqnarray*}
Putting this identity in equation (\ref{eq:phi_phi}) yields equation
(\ref{eq:phil_phil}).
\end{proof}
\begin{lem}
\label{lem:divgrad}The divergence and gradient are adjoint up to
a sign: $\div=-\grad^{*}$. That is, for local $f:\Omega\to\C$ and
a local admissible flow $\phi$,
\begin{equation}
\left\langle \phi,\grad f\right\rangle =-\left\langle \div\phi,f\right\rangle .
\end{equation}
In particular, $\cL=\div\grad$ is negative self-adjoint.
\end{lem}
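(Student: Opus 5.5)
The plan is to compute both sides directly from the definitions, reducing each to a single expectation at the origin by translation invariance of $\mu$ and of the rates. The key tools are the change of variables $\eta\mapsto\eta^{y,z}$, which preserves $\mu$ because $\mu$ is a product of identical Bernoulli measures and $\eta^{y,z}$ only permutes two coordinates, and the reversibility assumption $c(y,z;\eta)=c(y,z;\eta^{y,z})$. It suffices to treat local $f$ and local admissible $\phi$, so that all sums over $x,y,z$ are effectively finite modulo translations. The $-\mu(f^*)\mu(g)$ correction in $\langle\cdot,\cdot\rangle$ on functions will be handled by showing $\mu(\div\phi)=0$, which is itself a special case of the same antisymmetry computation.

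\textbf{Left-hand side.} Write
\[
\langle\phi,\grad f\rangle=\frac14\sum_{x,y,z}\mu\left[\phi(\eta,\eta^{y,z})^*\,\tau_x\big(f(\eta^{y,z})-f(\eta)\big)\right],
\]
where the factor $c^{-1}$ cancels the $c$ in $\grad f$. Here I use that $\tau_x\grad f(\eta,\eta^{y,z})=c(y+x,z+x;\tau_x\eta)\,(\tau_xf(\eta^{y,z})-\tau_xf(\eta))=c(y,z;\eta)(\dots)$ by translation invariance of the rates. Admissibility of $\phi$ ensures that the factor $c^{-1}$ only appears where $c\neq0$.

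\textbf{Splitting and change of variables.} Split the bracket into the two terms $\tau_xf(\eta^{y,z})$ and $-\tau_xf(\eta)$. In the first term, change variables $\eta\mapsto\eta^{y,z}$ (which is measure preserving) and use antisymmetry $\phi(\eta^{y,z},\eta)=-\phi(\eta,\eta^{y,z})$. This turns the first term into a copy of the second, so
\[
\langle\phi,\grad f\rangle=-\frac12\sum_{x,y,z}\mu\left[\phi(\eta,\eta^{y,z})^*\,\tau_xf(\eta)\right].
\]
Since each unordered pair $\{y,z\}$ appears twice and gives the same neighbour, $\frac12\sum_{y,z}\phi(\eta,\eta^{y,z})=\div\phi(\eta)$, provided distinct pairs give distinct $\eta'$ (or the divergence is read as a sum over transitions, which is the natural convention given the Observation that $\div\grad f=\cL f$). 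Hence
\[
\langle\phi,\grad f\rangle=-\sum_x\mu\left[(\div\phi)^*\tau_xf\right],
\]
and it remains to subtract $\mu(\div\phi)^*\mu(f)$ for free.

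\textbf{Main obstacle: exchanging sums and the vanishing mean.} The delicate step is justifying the exchange of the sums over $x,y,z$ with the integral and the change of variables. For local $\phi$ with only finitely many nonzero $\phi(\cdot,\cdot^{y,z})$ up to translation this is fine, but $\div\phi$ must be a local function. I will assume, as the statement implicitly does, that $\phi$ is local in the sense of being supported on finitely many bonds, so that after centring by $\mu(f)$ the $x$-sum converges by independence of $\mu$, exactly as in the previous lemma. For the centring: $\mu(\div\phi)=\frac12\sum_{y,z}\mu[\phi(\eta,\eta^{y,z})]=0$ by the same change of variables together with antisymmetry, so the centring term vanishes and $\langle\phi,\grad f\rangle=-\langle\div\phi,f\rangle$.

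\textbf{Self-adjointness.} Taking $\phi=\grad g$ and using the Observation gives $\langle\cL g,f\rangle=-\langle\grad g,\grad f\rangle$. The right-hand side is Hermitian-symmetric in $(f,g)$, and $\langle f,-\cL f\rangle=\langle\grad f,\grad f\rangle\ge0$ by the positive form in \eqref{eq:phi_phi}. Hence $\cL=\div\grad$ is negative self-adjoint.
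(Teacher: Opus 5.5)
Your proof is correct and follows the paper's argument: cancel the rate, split $f(\eta^{y,z})-f(\eta)$, change variables $\eta\mapsto\eta^{y,z}$ and use antisymmetry to get $-\frac12\sum_{x,y,z}$, then identify $\frac12\sum_{y,z}\phi(\eta,\eta^{y,z})$ with $\div\phi$. You differ from the paper only in minor ways, and these are fine: you put $\tau_x$ on $\grad f$ as in the definition, you handle the centring through $\mu(\div\phi)=0$ where the paper assumes $\mu(f)=0$ without loss of generality, and you spell out the self-adjointness consequence.

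Your hedge about distinct pairs is unnecessary and can be dropped. If $\eta(y)\neq\eta(z)$, then $\eta^{y,z}$ differs from $\eta$ exactly on $\{y,z\}$, so it determines the pair. If $\eta(y)=\eta(z)$, the term is $\phi(\eta,\eta)=0$.
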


\begin{proof}
Without loss of generality $\mu(f)=0$. Using the antisymmetry of
$\phi$,
\begin{eqnarray*}
\left\langle \phi,\grad f\right\rangle  & = & \frac{1}{4}\sum_{x,y,z\in\Z^{d}}\mu\left[c(y,z;\eta)^{-1}\tau_{x}\phi(\eta,\eta^{y,z})^{*}\ c(y,z;\eta)\ \left(f(\eta^{y,z})-f(\eta)\right)\right]\\
 & = & \frac{1}{4}\sum_{x,y,z\in\Z^{d}}\mu\left[\tau_{x}\phi(\eta,\eta^{y,z})^{*}\left(f(\eta^{y,z})-f(\eta)\right)\right]\\
 & = & -\frac{1}{4}\sum_{x,y,z\in\Z^{d}}\mu\left[\tau_{x}\phi(\eta^{y,z},\eta)^{*}f(\eta^{y,z})\right]-\frac{1}{4}\sum_{x,y,z\in\Z^{d}}\mu\left[\tau_{x}\phi(\eta,\eta^{y,z})^{*}f(\eta)\right]\\
 & = & -\frac{1}{2}\sum_{x,y,z\in\Z^{d}}\mu\left[\tau_{x}\phi(\eta,\eta^{y,z})^{*}f(\eta)\right].
\end{eqnarray*}
It remains only to note that $\div\phi(\eta)=\frac{1}{2}\sum_{y,z\in\Z}\phi(\eta,\eta^{y,z})$
(since any $\eta'$ in the equation (\ref{eq:def_div}) is given by
$\eta'=\eta^{y,z}=\eta^{z,y}$).
\end{proof}

\begin{lem}
Let $V_{l}$ be the space of admissible flows $\phi\in\cH_{2}$ such
that $\left\langle \div\phi-j_{l},\div\phi-j_{l}\right\rangle =0$.
Then 
\begin{equation}
\left\langle j_{l},\cL^{-1}j_{l}\right\rangle =\sup_{\phi\in V_{l}}\left\{ -\left\langle \phi,\phi\right\rangle \right\} .
\end{equation}
The (formal) expression $\left\langle j_{l},\cL^{-1}j_{l}\right\rangle $
is defined as the term $-\int_{0}^{\infty}\dd t\sum_{x}\mu\left[\tau_{x}j_{l}\ e^{t\cL}j_{l}\right]$
appearing in the Green-Kubo formula (\ref{eq:green-kubo}).
\end{lem}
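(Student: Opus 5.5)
This is Thomson's principle in the Hilbert space $\cH_{2}$. The plan is to identify the (formal) minimizer of $\left\langle \phi,\phi\right\rangle$ over $V_{l}$ as $\grad g$ with $g=\cL^{-1}j_{l}$, and then show every other element of $V_{l}$ differs from it by an element orthogonal to gradients.

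\textbf{Upper bound for the supremum.} Take a local $\phi$ with well-defined divergence, and an arbitrary local function $f$. Expanding $\left\langle \phi-\grad f,\phi-\grad f\right\rangle \ge0$ gives
\[
-\left\langle \phi,\phi\right\rangle \le-2\,\mathrm{Re}\left\langle \phi,\grad f\right\rangle +\left\langle \grad f,\grad f\right\rangle .
\]
By Lemma \ref{lem:divgrad} this equals $2\,\mathrm{Re}\left\langle \div\phi,f\right\rangle -\left\langle f,\cL f\right\rangle$. If $\phi\in V_{l}$, I will replace $\left\langle \div\phi,f\right\rangle$ by $\left\langle j_{l},f\right\rangle$. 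This is justified by Cauchy--Schwarz in $\cH_{1}$ and by passing to approximating sequences $\phi_{n}$ as in Definition \ref{def:V_0}, using continuity of $\phi\mapsto\left\langle \phi,\grad f\right\rangle$ on $\cH_{2}$. Hence
\[
-\left\langle \phi,\phi\right\rangle \le\inf_{f}\left\{ -\left\langle f,\cL f\right\rangle +2\left\langle f,j_{l}\right\rangle \right\} ,
\]
after replacing $f$ by $-f$ to fix signs. By Remark \ref{rem:dirichlet} the right side is $\left\langle j_{l},\cL^{-1}j_{l}\right\rangle$. To make this rigorous rather than formal, I would work with the resolvent $(\lambda-\cL)^{-1}$, $\lambda\downarrow0$. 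The Green--Kubo term is $-\lim_{\lambda\to0}\left\langle j_{l},(\lambda-\cL)^{-1}j_{l}\right\rangle$ by spectral calculus for the self-adjoint nonpositive $\cL$ on $\cH_{1}$, with monotone convergence.

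\textbf{Lower bound for the supremum.} Let $g_{\lambda}=(\lambda-\cL)^{-1}j_{l}\in\cH_{1}$ and set $\psi_{\lambda}=-\grad g_{\lambda}$. Then $\div\psi_{\lambda}=-\cL g_{\lambda}=j_{l}-\lambda g_{\lambda}$, so
\[
\left\langle \div\psi_{\lambda}-j_{l},\div\psi_{\lambda}-j_{l}\right\rangle =\lambda^{2}\left\langle g_{\lambda},g_{\lambda}\right\rangle .
\]
By the spectral theorem this tends to $0$. Similarly $\left\langle \psi_{\lambda},\psi_{\lambda}\right\rangle =-\left\langle g_{\lambda},\cL g_{\lambda}\right\rangle$ converges to $-\left\langle j_{l},\cL^{-1}j_{l}\right\rangle$, that is, to $\int_{0}^{\infty}\sum_{x}\mu[\tau_{x}j_{l}e^{t\cL}j_{l}]\,\dd t$. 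The same spectral arguments show that $\psi_{\lambda}$ is Cauchy in $\cH_{2}$. Its limit $\psi$ then lies in $V_{l}$, using the definition of $V_{l}$ via completion, and achieves the bound.

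Here one needs $j_{l}$ to be in the domain of $(-\cL)^{-1/2}$, i.e.\ the Green--Kubo integral to be finite. This holds because $j_{l}=\div\phi_{l}$, so $\left\langle j_{l},(-\cL)^{-1}j_{l}\right\rangle \le\left\langle \phi_{l},\phi_{l}\right\rangle$ by the variational bound in the first step.

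\textbf{Main obstacle.} The delicate part is the functional-analytic bookkeeping rather than the algebra. One must make sense of $\grad$ and $\div$ as operators between the completions $\cH_{1}$ and $\cH_{2}$ with degenerate inner products. One must also ensure the approximating sequences in the definition of $V_{l}$ interact correctly with the limits $\lambda\to0$, and that the local-function approximations of $g_{\lambda}$ give gradients converging in $\cH_{2}$. I would handle this by first proving everything for local objects, then extending by density using the bound $\left\langle \grad f,\grad f\right\rangle =-\left\langle f,\cL f\right\rangle$.
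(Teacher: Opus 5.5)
Your proposal is correct and follows essentially the same route as the paper. The upper bound comes from expanding $\left\langle \phi-\grad f,\phi-\grad f\right\rangle \ge0$, using $\div=-\grad^{*}$ and $\div\phi=j_{l}$ on $V_{l}$, and the supremum is attained as the $\cH_{2}$-limit of a Cauchy family of gradient flows of approximate solutions of $\cL f=j_{l}$. The differences are only technical: you regularize with the resolvent $(\lambda-\cL)^{-1}j_{l}$ instead of the paper's truncated integral $f_{T}=-\int_{0}^{T}\dd t\,e^{t\cL}j_{l}$, which turns every limit into a dominated or monotone convergence against the spectral measure of $j_{l}$. You also derive finiteness of the Green--Kubo integral from $j_{l}=\div\phi_{l}$ rather than quoting \cite[equation II.2.26]{Spohn2012IPS}.
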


\begin{proof}
We start with the heuristics behind the proof: define formally $\phi_{*}=\grad f$
for $f=\cL^{-1}j_{l}$. First, note that $\div\phi_{*}=\div\grad f=j_{l}$,
hence $\phi_{*}\in V_{l}$. Then, on one hand,
\[
\left\langle \phi_{*},\phi_{*}\right\rangle =-\left\langle f,\div\grad f\right\rangle =-\left\langle f,\cL f\right\rangle =-\left\langle j_{l},\cL^{-1}j_{l}\right\rangle .
\]
On the other hand, any $\phi\in V_{l}$ can be written as $\phi_{*}+\delta$
for $\delta\in V_{0}$, hence 
\[
\left\langle \phi,\phi\right\rangle =\left\langle \phi_{*},\phi_{*}\right\rangle +2\left\langle \phi_{*},\delta\right\rangle +\left\langle \delta,\delta\right\rangle \ge\left\langle \phi_{*},\phi_{*}\right\rangle -2\left\langle f,\div\delta\right\rangle +0=\left\langle \phi_{*},\phi_{*}\right\rangle .
\]

Unfortunately, while the combination $\left\langle j_{l},\cL^{-1}j_{l}\right\rangle $
is finite (\cite[equation II.2.26]{Spohn2012IPS}), $\cL^{-1}j_{l}$
is not necessarily a well defined element of $\cH_{1}$. Hence, we
prove the lemma by approximated solutions of the Poisson problem (\ref{eq:poisson_problem}).
Define, for $T>0$, 
\begin{equation}
f_{T}=-\int\limits _{0}^{T}\dd t\ e^{t\cL}j_{l}.
\end{equation}
By \cite[equation II.2.26]{Spohn2012IPS}, $\left\langle j_{l},f_{T}\right\rangle $
converges as $T\to\infty$, to a limit that we denote $\left\langle j_{l},\cL^{-1}j_{l}\right\rangle $.
The notation $\cL^{-1}j_{l}$ comes from the fact that we think of
$f_{T}$ as an approximate solution of the Poisson problem (\ref{eq:poisson_problem}).
Indeed,
\[
\cL f_{T}=-\int\limits _{0}^{T}\dd t\ \frac{\dd}{\dd t}\left(e^{t\cL}j_{l}\right)=j_{l}-e^{T\cL}j_{l},
\]
with remainder satisfying
\[
\left\langle e^{T\cL}j_{l},e^{T\cL}j_{l}\right\rangle =\left\langle j_{l},e^{2T\cL}j_{l}\right\rangle \xrightarrow{T\to\infty}0
\]
since $\int^{\infty}\dd t\left\langle j_{l},e^{t\cL}j_{l}\right\rangle $
is positive and convergent. Moreover,
\[
\left\langle \cL f_{T},f_{T}\right\rangle =\left\langle j_{l},f_{T}\right\rangle -\left\langle e^{T\cL}j_{l},f_{T}\right\rangle =\left\langle j_{l},f_{T}\right\rangle -\left\langle j_{l},-\int\limits _{0}^{T}\dd t\ e^{(t+T)\cL}j_{l}\right\rangle \xrightarrow{T\to\infty}\left\langle j_{l},\cL^{-1}j_{l}\right\rangle .
\]

With this in mind, The proof of the lemma consists of two parts:
\begin{claim}
Let $(\phi_{n})_{n\in\N}$ be a sequence of admissible flows, with
$\left\langle \phi_{n},\phi_{n}\right\rangle <\infty$, satisfying
\begin{enumerate}
\item the Cauchy criterion $\left\langle \phi_{n}-\phi_{m},\phi_{n}-\phi_{m}\right\rangle \xrightarrow{n>m\to\infty}0$, 
\item \textbf{$\left\langle \div\phi_{n}-j_{l},g\right\rangle \xrightarrow{n\to\infty}0$}
for all $g:\Omega\to\C$ with $\left\langle g,g\right\rangle <\infty$.
\end{enumerate}
Then $\lim_{n\to\infty}\left\{ -\left\langle \phi_{n},\phi_{n}\right\rangle \right\} \le\left\langle j_{l},\cL^{-1}j_{l}\right\rangle $.
\end{claim}

\begin{proof}
Let $\phi_{T}=\grad f_{T}$. For any $T>0$,
\begin{eqnarray*}
\left\langle \phi_{n},\phi_{n}\right\rangle  & = & \left\langle \phi_{T},\phi_{T}\right\rangle +2\left\langle \phi_{T},\phi_{n}-\phi_{T}\right\rangle +\left\langle \phi_{n}-\phi_{T},\phi_{n}-\phi_{T}\right\rangle \\
 & \ge & -\left\langle f_{T},\div\phi_{T}\right\rangle -2\left\langle f_{T},\div\phi_{n}\right\rangle +2\left\langle f_{T},\div\phi_{T}\right\rangle \\
 & = & \left\langle f_{T},\cL f_{T}\right\rangle -2\left\langle f_{T},\div\phi_{n}-j_{l}\right\rangle -2\left\langle f_{T},j_{l}\right\rangle \\
 &  & \xrightarrow{n\to\infty}\left\langle f_{T},\cL f_{T}\right\rangle -2\left\langle f_{T},j_{l}\right\rangle .
\end{eqnarray*}
This is true for all $T$, hence 
\[
\lim_{n\to\infty}\left\langle \phi_{n},\phi_{n}\right\rangle \ge\limsup_{T>0}\left(\left\langle f_{T},\cL f_{T}\right\rangle -2\left\langle f_{T},j_{l}\right\rangle \right)=-\left\langle j_{l},\cL^{-1}j_{l}\right\rangle .
\]
\end{proof}
\begin{claim}
Let $\phi_{T}=\grad f_{T}$. Then:
\begin{enumerate}
\item $\phi_{T}$ is Cauchy, i.e., $\left\langle \phi_{T}-\phi_{T'},\phi_{T}-\phi_{T'}\right\rangle \xrightarrow{T>T'\to\infty}0$,
\item $\left\langle \div\phi_{T}-j_{l},\div\phi_{T}-j_{l}\right\rangle \xrightarrow{T\to\infty}0$,
\item $-\left\langle \phi_{T},\phi_{T}\right\rangle \xrightarrow{T\to\infty}\left\langle j_{l},\cL^{-1}j_{l}\right\rangle $.
\end{enumerate}
\end{claim}

\begin{proof}
First, we show that the sequence is Cauchy: for $T>T'$,
\begin{eqnarray*}
\left\langle \phi_{T}-\phi_{T'},\phi_{T}-\phi_{T'}\right\rangle  & = & -\left\langle f_{T}-f_{T'},\cL f_{T}-\cL f_{T'}\right\rangle \\
 & = & -\left\langle \int\limits _{T'}^{T}\dd t\ e^{t\cL}j_{l},e^{T\cL}j_{l}\right\rangle +\left\langle \int\limits _{T'}^{T}\dd t\ e^{t\cL}j_{l},e^{T'\cL}j_{l}\right\rangle \\
 &  & \xrightarrow{T>T'\to\infty}0.
\end{eqnarray*}
Next, note that 
\[
\left\langle \div\phi_{T}-j_{l},\div\phi_{T}-j_{l}\right\rangle =\left\langle \cL f_{T}-j_{l},\cL f_{T}-j_{l}\right\rangle \xrightarrow{T\to\infty}0.
\]
Finally,
\[
-\left\langle \phi_{T},\phi_{T}\right\rangle =\left\langle \cL f_{T},f_{T}\right\rangle \xrightarrow{T\to\infty}\left\langle j_{l},\cL^{-1}j_{l}\right\rangle .
\]
\end{proof}
These two claims prove our lemma. Indeed, the first shows that any
element of $V_{l}$ satisfies $-\left\langle \phi,\phi\right\rangle \le\left\langle j_{l},\cL^{-1}j_{l}\right\rangle $.
Note that, since $\cH_{1}$ is given by completion, we realize such
an element by a Cauchy sequence. The requirement \textbf{$\left\langle \div\phi_{n}-j_{l},g\right\rangle \xrightarrow{n\to\infty}0$}
for all local $g$ is weaker than the definition of $V_{l}$, i.e.
\textbf{$\left\langle \div\phi_{n}-j_{l},\div\phi_{n}-j_{l}\right\rangle \xrightarrow{n\to\infty}0$};
we prove here the more general version in case it might come handy.
Finally, the second claim shows that the supremum is attained for
some $\phi$, expressed as a Cauchy sequence $(\phi_{T})_{T\ge0}$.
\end{proof}
The proof of Theorem \ref{thm:thomson} follows directly: by equation
(\ref{eq:j_div_phi}), we can write $V_{l}$ as $\phi_{l}+V_{0}$,
so
\begin{multline*}
\left\langle j_{l},\cL^{-1}j_{l}\right\rangle =\sup_{\phi\in V_{l}}\left\{ -\left\langle \phi,\phi\right\rangle \right\} =\sup_{\psi\in V_{0}}\left\{ -\left\langle \phi_{l}-\psi,\phi_{l}-\psi\right\rangle \right\} \\
=-\left\langle \phi_{l},\phi_{l}\right\rangle +\sup_{\psi\in V_{0}}\left\{ 2\left\langle \phi_{l},\psi\right\rangle -\left\langle \psi,\psi\right\rangle \right\} .
\end{multline*}
By \cite[equation II.2.33]{Spohn2012IPS} together with equation (\ref{eq:phil_phil})
(see also Remark \ref{rem:dirichlet}), the Green-Kubo formula (\ref{eq:green-kubo})
can be written as
\[
l\cdot Dl=\frac{1}{\chi}\left(\left\langle \phi_{l},\phi_{l}\right\rangle +\left\langle j_{l},\cL^{-1}j_{l}\right\rangle \right),
\]
yielding equation (\ref{eq:thomson}).\qed

\section{\label{sec:BT}Three times Bertini-Toninelli}

The focus of this section is to apply Theorem \ref{thm:thomson} in
order to bound the diffusion coefficient of the Bertini-Toninelli
model. We will demonstrate three different techniques to find test
flows in $V_{0}$ that can be used in equation (\ref{eq:thomson})
and see what bounds they provide. At the end of the section, we will
see how to combine these flows together to further improve our estimate.

The one dimensional Bertini-Toninelli model \cite{BertiniToninelli}
is given by the transition rates 
\begin{equation}
c(x,x+1;\eta)=1-\eta(x-1)\eta(x+2),\label{eq:c_BT}
\end{equation}
that is, a transition from $\eta$ to $\eta^{x,x+1}$ occurs with
rate $1$ when $\eta(x-1)$ is empty or $\eta(x+2)$ is empty.

Since in dimension $1$ the diffusion coefficient is just a number,
we will fix $l=1$ (keeping the subscript $l$ for clarity).

Before bounding the diffusion coefficient from below, let us write
down for later reference the upper bound given by (\ref{eq:dirichlet})
with the test function $f\equiv0$:
\begin{equation}
D\le1-p^{2}=q(2-q).\label{eq:BT_upperbound}
\end{equation}

We start with two useful formulas, exploiting the fact that the Bertini-Toninelli
model is one dimensional with nearest neighbor jumps.
\begin{claim}
\label{claim:BT_formulas}Let $\phi:\Omega^{2}\to\R$ be an admissible
flow. Then 
\begin{eqnarray*}
\left\langle \phi,\phi\right\rangle  & = & \frac{1}{2}\mu\left[\left(\sum_{x\in\Z}\tau_{x}\phi(\eta,\eta^{0,1})\right)^{2}\right],\\
\left\langle \phi_{l},\phi\right\rangle  & = & \frac{1}{2}\sum_{y}\mu\left[\phi(\eta,\eta^{y,y+1})\,(\eta(y)-\eta(y+1))\right].
\end{eqnarray*}
\end{claim}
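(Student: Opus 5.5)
Both identities follow by specializing the general formulas of Section \ref{sec:flows_and_proof} to the nearest-neighbour, one-dimensional setting, where the rates take values in $\{0,1\}$.

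For the first identity, I start from equation (\ref{eq:phi_phi}). Since $c(0,z;\eta)=0$ unless $z=\pm1$, only $z=1$ and $z=-1$ contribute. By the convention that $c^{-1}\phi$ is interpreted as $0$ where $c=0$ (admissibility forces $\phi=0$ there), the weight $c(0,z;\eta)^{-1}$ equals $1$ wherever the summand is nonzero, because $c\in\{0,1\}$ for the Bertini--Toninelli rates. Hence
\[
\left\langle \phi,\phi\right\rangle =\frac{1}{4}\sum_{z=\pm1}\mu\left[\left(\sum_{x}\tau_{x}\phi(\eta,\eta^{0,z})\right)^{2}\right].
\]
It remains to see that the $z=-1$ term equals the $z=1$ term. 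Using $\eta^{0,-1}=\eta^{-1,0}$ and translation invariance of $\mu$, I substitute $\eta\mapsto\tau_{1}\eta$. This sends $\sum_{x}\tau_{x}\phi(\eta,\eta^{-1,0})$ to $\sum_{x}\tau_{x+1}\phi(\eta,\eta^{0,1})$, and a reindexing of $x$ gives the same sum. The two terms combine into the factor $\frac{1}{2}$.

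For the second identity, I first compute the polarized version of (\ref{eq:phi_phi}), obtained by the same manipulation as in the proof of that lemma with $\phi$ and $\psi$ distinct and real:
\[
\left\langle \phi_{l},\phi\right\rangle =\frac{1}{4}\sum_{z}\mu\left[c(0,z;\eta)^{-1}\Big(\sum_{y}\tau_{y}\phi_{l}(\eta,\eta^{0,z})\Big)\Big(\sum_{x}\tau_{x}\phi(\eta,\eta^{0,z})\right)\Big].
\]
I then insert the Claim's formula $\sum_{y}\tau_{y}\phi_{l}(\eta,\eta^{0,z})=z\,c(0,z;\eta)(\eta(0)-\eta(z))$ with $l=1$. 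The factor $c$ cancels $c^{-1}$. As before, the $z=-1$ term maps to the $z=1$ term under translation, because $z(\eta(0)-\eta(z))$ at $z=-1$ equals $\eta(-1)-\eta(0)$. This leaves
\[
\left\langle \phi_{l},\phi\right\rangle =\frac{1}{2}\mu\left[(\eta(0)-\eta(1))\sum_{x}\tau_{x}\phi(\eta,\eta^{0,1})\right].
\]
Finally, I write $\tau_{x}\phi(\eta,\eta^{0,1})=\phi(\tau_{x}\eta,(\tau_{x}\eta)^{x,x+1})$ and, for each $x$ separately, change variables $\eta\mapsto\tau_{-x}\eta$ using translation invariance of $\mu$. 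This turns $\eta(0)-\eta(1)$ into $\eta(-x)-\eta(1-x)$ and the flow term into $\phi(\eta,\eta^{-x,1-x})$. Renaming $y=-x$ gives the stated sum, which is the point where I need to check the translation directions carefully.

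The main obstacle is justifying the exchange of $\mu$ with the infinite sums over $x$ and $y$, and the use of local formulas for general $\phi$. I would prove both identities for local $\phi$, where all sums are effectively finite, and then extend by completion in $\cH_{2}$. The right-hand side of the second identity is continuous in $\phi$, since it equals $\left\langle \phi_{l},\phi\right\rangle$ by the computation above.
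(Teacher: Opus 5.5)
Your proposal is correct and is essentially the paper's proof, which is only the remark ``direct calculation from the definition of $\phi_l$ and the inner product.'' You carry that calculation out: specialize (\ref{eq:phi_phi}) and its polarization to $z=\pm1$ with $\{0,1\}$-valued rates, insert the formula for $\sum_x\tau_x\phi_l$, fold the $z=-1$ term onto $z=1$ by translation invariance, and unfold the sum over translates.

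The only slip is in the direction of the shifts, and it does not affect the result. With the paper's convention $\tau_z\eta(x)=\eta(x-z)$, set $S(\eta)=\sum_x\tau_x\phi(\eta,\eta^{0,1})$; then $\sum_x\tau_x\phi(\eta,\eta^{0,-1})=S(\tau_1\eta)$. Likewise, the final change of variables gives $(\eta(x)-\eta(x+1))\,\phi(\eta,\eta^{x,x+1})$ directly, so the renaming $y=-x$ is not needed.
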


\begin{proof}
Direct calculation from the definition of $\phi_{l}$ and the inner
product.
\end{proof}

\subsection{Direct test with the flow of a gradient model}

The first test flow we construct is inspired by the observation \cite{GoncalvesLandimToninelli,Shapira2024Noncooperative}
that the Bertini-Toninelli model is dominated by a gradient model.
Indeed, one may define a model with transition rates 
\begin{equation}
c^{+}(x,x+1)=1-\frac{1}{2}\eta(x-1)-\frac{1}{2}\eta(x+2),
\end{equation}
that we will call the +-model. This model is on one hand gradient,
and on the other hand $c^{+}(x,x+1;\eta)\le c(x,x+1;\eta)$ for all
$x\in\Z,\eta\in\Omega$. 

The fact that the +-model is gradient means that we can calculate
its diffusion coefficient explicitly, yielding $D^{+}=q$. Then, by
the minimization principle (\ref{eq:dirichlet}), 
\begin{equation}
D\ge q.\label{eq:BT_bound0}
\end{equation}

Using Theorem \ref{thm:thomson} we can do better: since the +-model
is gradient, its ``$\phi_{l}$'' is in $V_{0}$ (see Remark \ref{rem:gradient}),
and could be used as a test flow. Let 
\begin{equation}
\phi(\eta,\eta^{0,z})=\frac{1}{2}z\ c^{+}(0,z;\eta)\left(\eta(0)-\eta(z)\right),\qquad\eta\in\Omega,z\in\pm1.
\end{equation}
Then 
\begin{eqnarray}
\div\phi(\eta) & = & \frac{1}{2}\ c^{+}(0,1;\eta)\left(\eta(0)-\eta(1)\right)-\frac{1}{2}\ c^{+}(0,-1;\eta)\left(\eta(-1)-\eta(0)\right)\label{eq:div_phi1}\\
 & = & -\frac{1}{4}\eta(-1)\eta(0)-\frac{1}{4}\eta(2)\eta(0)+\frac{1}{4}\eta(-1)\eta(1)+\frac{1}{4}\eta(2)\eta(1)\nonumber \\
 &  & +\frac{1}{4}\eta(-2)\eta(-1)+\frac{1}{4}\eta(1)\eta(-1)-\frac{1}{4}\eta(-2)\eta(0)-\frac{1}{4}\eta(1)\eta(0).\nonumber 
\end{eqnarray}
As expected, $\left\langle \div\phi,g\right\rangle =0$ for all $g$:
indeed, $\mu(\tau_{x}\phi)$ is independent of $\phi$ whenever $|x|>2$.
We may therefore replace the configuration $\eta\in\Omega$ with a
configuration on a large torus, e.g. $\Omega_{100}=\{0,1\}^{\Z/100\Z}$,
so (noting $\mu(\div\phi)=0$)
\[
\left\langle \div\phi,g\right\rangle =\mu\left[\left(\sum_{x\in\Z/100\Z}\tau_{x}\div\phi(\eta)\right)\cdot g\right].
\]
Let us now look closely at the sum $\sum_{x\in\Z/100\Z}\tau_{x}\div\phi(\eta)$:
each pair of neighboring particles contributes $-\frac{1}{4}+\frac{1}{4}+\frac{1}{4}-\frac{1}{4}=0$,
corresponding to the terms in the sum (\ref{eq:div_phi1}) with neighboring
sites; and each pair of particles at distance $2$ contributes $-\frac{1}{4}+\frac{1}{4}+\frac{1}{4}-\frac{1}{4}=0$,
corresponding to the terms in the sum (\ref{eq:div_phi1}) with sites
at distance $2$. Overall, $\sum_{x\in\Z/100\Z}\tau_{x}\div\phi(\eta)=0$.

Since $c^{+}(x,y;\eta)\le c(x,y;\eta)$, the flow $\phi$ is admissible,
and we conclude that it is in $V_{0}$. We may therefore use this
test flow in equation (\ref{eq:thomson}).

By direct calculation (using Claim \ref{claim:BT_formulas}): 
\begin{eqnarray}
\left\langle \phi,\phi\right\rangle  & = & \frac{1}{2}pq^{2}(1+q),\\
\left\langle \phi_{l},\phi\right\rangle  & = & pq^{2}.
\end{eqnarray}
Then, by Corollary \ref{cor:optimize_flow},
\begin{equation}
D\ge\frac{1}{\chi}\ \frac{\left\langle \phi_{l},\phi\right\rangle ^{2}}{\left\langle \phi,\phi\right\rangle }=\frac{1}{pq}\frac{p^{2}q^{4}}{\frac{1}{2}pq^{2}(1+q)}=\frac{2q}{1+q}.
\end{equation}

\begin{rem}
\label{rem:BT_qualitative}The bound we obtain is better than the
immediate, previously known bound (\ref{eq:BT_bound0}). Moreover,
in the dense regime $q\ll1$ (which is the physically interesting
one), this bound matches the upper bound (\ref{eq:BT_upperbound})
up to a factor $1+O(q)$ (compared to the factor $2$ of (\ref{eq:BT_bound0})).
Qualitatively, it means that the Bertini-Toninelli model behaves,
up to a small correction, in the same way as the +-model (accelerated
by $2$). This shouldn't come as a surprise: for small $q$, even
though some transitions in the +-model occur with rate $1$, most
occur with rate $\frac{1}{2}$; hence it is reasonable to neglect
the rare ``fast'' transitions, obtaining the Bertini-Toninelli model
(decelerated by a factor $2$). Using Theorem \ref{thm:thomson} allows
us to exploit this heuristics, since rare transitions count less when
taking \emph{expectation} of $\frac{c^{+}(x,y;\eta)}{c(x,y;\eta)}$.
This is in contrary to the bound (\ref{eq:BT_bound0}) which requires
a factor \emph{uniformly} bounding $\frac{c^{+}(x,y;\eta)}{c(x,y;\eta)}$.
\end{rem}

\subsection{A path to a gradient model}

The situation in the last paragraph is rather lucky: general models
might not have a gradient model with transition rates bounded by $c$.
One remedy is to compare rates of a single exchange not directly,
but via a short path of allowed transitions. \cite{Shapira2024Noncooperative}
discusses a large family of models that can be compared via paths
to gradient models. To simplify the exposition, we stick to the Bertini-Toninelli
model, replacing the gradient model of the previous section with another
(less natural) one.

\cite[Section 6.1]{Shapira2024Noncooperative} construct a family
of gradient models, and we will consider here the one given by the
rates:
\begin{multline}
c^{2}(x,x+1;\eta)=\overline{\eta}(x-3)\overline{\eta}(x-2)+\overline{\eta}(x-1)\overline{\eta}(x+3)+\overline{\eta}(x+2)\overline{\eta}(x+4),
\end{multline}
using the notation 
\[
\overline{\eta}(x)=1-\eta(x).
\]
We can see that the second and third term correspond to transitions
where $c(x,x+1;\eta)=1$ (since $\eta(x-1)=0$ or $\eta(x+2)=0$).
The first term, however, can be nonzero even if $c(x,x+1;\eta)=0$.
Nonetheless, when $\overline{\eta}(x-3)\overline{\eta}(x-2)\neq0$,
we can still exchange $x$ with $x+1$ using three steps, see Figure
\ref{fig:finite_path}.
\begin{figure}
\begin{tikzpicture}[scale=0.3, every node/.style={scale=0.6}]
	\draw[step=1,gray] (0,0) grid +(7,1);
	\node[star, star point ratio=0.4, fill, scale=0.8] at (0.5,0.5) {};	
	\node[star, star point ratio=0.4, fill, scale=0.8] at (3.5,0.5) {};	
	\node[regular polygon,regular polygon sides=3, scale=0.4, fill] at (4.5,0.4) {};	
	\node[regular polygon,regular polygon sides=3, rotate=180, scale=0.4, fill] at (5.5,0.5) {};	
	\node[star, star point ratio=0.4, fill, scale=0.8] at (6.5,0.5) {};	
	
	\draw[->,shift={(0.5,0.5)}]  (7,0) to (8,0);
	
	\draw[step=1,gray] (9,0) grid +(7,1);
	\node[star, star point ratio=0.4, fill, scale=0.8] at (9.5,0.5) {};	
	\node[star, star point ratio=0.4, fill, scale=0.8] at (11.5,0.5) {};	
	\node[regular polygon,regular polygon sides=3, scale=0.4, fill] at (13.5,0.4) {};	
	\node[regular polygon,regular polygon sides=3, rotate=180, scale=0.4, fill] at (14.5,0.5) {};	
	\node[star, star point ratio=0.4, fill, scale=0.8] at (15.5,0.5) {};

	\draw[->,shift={(0.5,0.5)}]  (16,0) to (17,0);
	
	\draw[step=1,gray] (18,0) grid +(7,1);
	\node[star, star point ratio=0.4, fill, scale=0.8] at (18.5,0.5) {};	
	\node[star, star point ratio=0.4, fill, scale=0.8] at (20.5,0.5) {};	
	\node[regular polygon,regular polygon sides=3, rotate=180, scale=0.4, fill] at (22.5,0.5) {};	
	\node[regular polygon,regular polygon sides=3, scale=0.4, fill] at (23.5,0.4) {};	
	\node[star, star point ratio=0.4, fill, scale=0.8] at (24.5,0.5) {};	
	
	\draw[->,shift={(0.5,0.5)}]  (25,0) to (26,0);
	
	\draw[step=1,gray] (27,0) grid +(7,1);
	\node[star, star point ratio=0.4, fill, scale=0.8] at (27.5,0.5) {};	
	\node[star, star point ratio=0.4, fill, scale=0.8] at (30.5,0.5) {};	
	\node[regular polygon,regular polygon sides=3, rotate=180, scale=0.4, fill] at (31.5,0.5) {};	
	\node[regular polygon,regular polygon sides=3, scale=0.4, fill] at (32.5,0.4) {};	
	\node[star, star point ratio=0.4, fill, scale=0.8] at (33.5,0.5) {};	
	
\end{tikzpicture}

\caption{\label{fig:finite_path}The path described in equation (\ref{eq:finite_path}).
The sites $-3$ and $-2$ are initially empty, other sites may be
filled or empty, denoted by $\star$. The particle/hole at $0$ and
$1$ are marked as black triangles.}

\end{figure}

We will define a family of flows, depending on $\omega\in\Omega$
such that $c^{2}(0,1;\omega)\neq0$:
\begin{enumerate}
\item For $\omega\in\Omega$ such that $\overline{\omega}(-3)\overline{\omega}(-2)\neq0$,
let (see Figure \ref{fig:finite_path})
\begin{equation}
\omega_{1}=\omega,\ \omega_{2}=\omega_{1}^{-2,-1},\ \omega_{3}=\omega_{2}^{0,1},\ \omega_{4}=\omega_{3}^{-2,-1}=\omega^{0,1},\label{eq:finite_path}
\end{equation}
and the flow 
\begin{eqnarray*}
\psi_{\omega}^{1}(\eta,\eta') & = & \begin{cases}
\omega(0)-\omega(1) & \text{if }\exists i\in\{1,2,3\}\text{ such that }\text{\ensuremath{\eta=\omega_{i},\eta'=\omega_{i+1}},}\\
\omega(1)-\omega(0) & \text{if }\exists i\in\{1,2,3\}\text{ such that }\text{\ensuremath{\eta=\omega_{i+1},\eta'=\omega_{i}}},\\
0 & \text{otherwise.}
\end{cases}
\end{eqnarray*}
It is a direct verification that $\psi_{\omega}^{1}$ is antisymmetric
and admissible (thanks to the condition $\omega(-3)=\omega(-2)=0$).
We can also calculate 
\[
\div\psi_{\omega}^{1}(\eta)=\begin{cases}
\eta(0)-\eta(1) & \text{if }\eta=\omega\text{ or }\eta=\omega^{0,1},\\
0 & \text{otherwise.}
\end{cases}
\]
\item For $\omega\in\Omega$ such that $\overline{\omega}(-1)\overline{\omega}(3)\neq0$,
we simply take the flow 
\begin{equation}
\psi_{\omega}^{2}(\eta,\eta^{0,1})=\begin{cases}
\eta(0)-\eta(1) & \text{if }\eta=\omega\text{ or }\eta=\omega^{0,1},\\
0 & \text{otherwise}.
\end{cases}
\end{equation}
This is also an admissible flow, and 
\[
\div\psi_{\omega}^{2}(\eta)=\begin{cases}
\eta(0)-\eta(1) & \text{if }\eta=\omega\text{ or }\eta=\omega^{0,1},\\
0 & \text{otherwise.}
\end{cases}
\]
\item For $\omega\in\Omega$ such that $\overline{\omega}(2)\overline{\omega}(4)\neq0$,,
we take the flow 
\begin{equation}
\psi_{\omega}^{3}(\eta,\eta^{0,1})=\begin{cases}
\eta(0)-\eta(1) & \text{if }\eta=\omega\text{ or }\eta=\omega^{0,1},\\
0 & \text{otherwise}.
\end{cases}
\end{equation}
This is again an admissible flow, and 
\[
\div\psi_{\omega}^{3}(\eta)=\begin{cases}
\eta(0)-\eta(1) & \text{if }\eta=\omega\text{ or }\eta=\omega^{0,1},\\
0 & \text{otherwise.}
\end{cases}
\]
\end{enumerate}
We now define 
\begin{eqnarray*}
\psi^{1} & = & \sum_{\omega\in\Omega}\overline{\omega}(-3)\overline{\omega}(-2)\ \psi_{\omega}^{1},\\
\psi^{2} & = & \sum_{\omega\in\Omega}\overline{\omega}(-1)\overline{\omega}(3)\ \psi_{\omega}^{2},\\
\psi^{3} & = & \sum_{\omega\in\Omega}\overline{\omega}(2)\overline{\omega}(4)\ \psi_{\omega}^{3},\\
\psi & = & \psi^{1}+\psi^{2}+\psi^{3}.
\end{eqnarray*}
Then 
\begin{eqnarray*}
\div\psi^{1}(\eta) & = & \sum_{\omega}\overline{\omega}(-3)\overline{\omega}(-2)\ (\eta(0)-\eta(1))\One_{\eta=\omega\text{ or }\eta=\omega^{0,1}}\\
 & = & 2\ \overline{\eta}(-3)\overline{\eta}(-2)\ (\eta(0)-\eta(1)),\\
\div\psi^{2}(\eta) & = & 2\ \overline{\eta}(-1)\overline{\eta}(3)\ (\eta(0)-\eta(1)),\\
\div\psi^{3}(\eta) & = & 2\ \overline{\eta}(2)\overline{\eta}(4)\ (\eta(0)-\eta(1)),\\
\div\psi(\eta) & = & 2c^{2}(0,1;\eta)\ (\eta(0)-\eta(1)).
\end{eqnarray*}
By the same argument in the previous section (after equation (\ref{eq:div_phi1})),
we obtain $\psi\in V_{0}$.

A somewhat technical but straightforward calculation yields:
\begin{eqnarray*}
\sum_{x}\tau_{x}\psi^{1}(\eta,\eta^{0,1}) & = & 2(\overline{\eta}(3)-\overline{\eta}(2)+\overline{\eta}(-3))\ \overline{\eta}(-1))\ (\eta(0)-\eta(1)),\\
\sum_{x}\tau_{x}\psi^{2}(\eta,\eta^{0,1}) & = & 2(\overline{\eta}(-1)\overline{\eta}(3)\ (\eta(0)-\eta(1)),\\
\sum_{x}\tau_{x}\psi^{3}(\eta,\eta^{0,1}) & = & 2\overline{\eta}(2)\overline{\eta}(4)\ (\eta(0)-\eta(1)),\\
\sum_{x}\tau_{x}\psi(\eta,\eta^{0,1}) & = & 2\left(2\overline{\eta}(-1)\overline{\eta}(3)-\overline{\eta}(-1)\overline{\eta}(2)+\overline{\eta}(-3)\overline{\eta}(-1)+\overline{\eta}(2)\overline{\eta}(4)\right)(\eta(0)-\eta(1)).
\end{eqnarray*}
Note that, not surprisingly, when this expression is nonzero $c(0,1;\eta)=1$,
so we may calculate
\[
\left\langle \psi,\psi\right\rangle =\frac{1}{2}\mu\left[\left(\sum_{x\in\Z}\tau_{x}\psi(\eta,\eta^{0,1})\right)^{2}\right]=4pq^{3}\left(7-4q+6q^{2}\right)
\]
and
\[
\left\langle \phi_{l},\psi\right\rangle =6pq^{3}.
\]
We conclude using Corollary \ref{cor:optimize_flow}:
\[
D\ge\frac{1}{\chi}\ \frac{\left\langle \phi_{l},\psi\right\rangle ^{2}}{\left\langle \psi,\psi\right\rangle }=\frac{1}{pq}\frac{36p^{2}q^{6}}{4pq^{3}\left(7-4q+6q^{2}\right)}=\frac{9q^{2}}{7-4q+6q^{2}}.
\]

\subsection{\label{subsec:unbounded_path}Unbounded path to the simple exclusion
process}

The last test flow we construct will also be via a path, but rather
than a carefully chosen gradient model, we compare the Bertini-Toninelli
model to the simple exclusion process, where two neighboring sites
exchange occupation with rate $1$. Unlike the test flow $\psi$,
that required paths of length 1 or 3, we will need here paths of arbitrary
length. This would become important when studying other models: as
discussed in \cite{Shapira23KA_HL,Shapira2024Noncooperative}, kinetically
constrained lattice gases can be divided in two classes determining
their qualitative behavior: \emph{cooperative} and \emph{non-cooperative}.
It is shown there that models that can be compared to a gradient model
via fixed size paths are all non-cooperative; hence cooperative models
can only be studied if we are able to use paths of unbounded length.

For a configuration $\omega\in\Omega$, define:
\begin{eqnarray*}
X(\eta) & = & \min\{x\ge0:\eta(x)=\eta(x+1)=0\},\\
X_{4}(\eta) & = & \min\{x\ge4:\eta(x)=\eta(x+1)=0\},\\
X_{+}(\eta) & = & \min\{y\ge X(\eta)+2:\eta(y)=\eta(y+1)=0\},\\
X_{-}(\eta) & = & \max\left\{ x\le-2:\eta(x)=\eta(x+1)=0\right\} .
\end{eqnarray*}
Following \cite{BertiniToninelli,Shapira2024Noncooperative} we will
construct a path, starting with some $\omega\in\Omega$, that will
use only transitions with positive rates, and end at $\omega^{0,1}$.
The following lemma gathers the properties of this path that we will
need when analyzing the corresponding flow (see also Figure \ref{fig:unbounded_path}).
\begin{lem}
\label{lem:path}For almost any $\omega\in\Omega$, $\omega(0)\neq\omega(1)$,
there exists a path $\omega_{1},\dots,\omega_{N}$, for $N=N(\omega)\in2\N$,
and a sequence of sites $x_{1},\dots,x_{N-1}$ in $[0,X(\omega)]$,
such that $\omega_{N}=\omega^{0,1}$, $\omega_{i+1}=\omega_{i}^{x_{i},x_{i}+1}\neq\omega_{i}$,
and $c(x_{i},x_{i+1};\omega_{i})=1$ for any $i\in\{1,\dots,N-1\}$.
Moreover, the following properties are satisfied:
\begin{enumerate}
\item $x_{1},\dots,x_{N/2-1}$ are all in $[2,X(\omega)],$ $x_{N/2}=0$,
and $x_{i}=x_{N-i}$ for any $i\ge N/2+1$.
\item $\omega_{j}(0)-\omega_{j}(1)=\omega(0)-\omega(1)$ for $j\le N/2$
and $\omega_{j}(0)-\omega_{j}(1)=\omega(1)-\omega(0)$ for $j>N/2$.
\item For all $i\in\{1,\dots,N-1\}$, $\omega_{i}(x)=\omega(x)$ if $x\notin[0,X(\omega)+1]$.
\item For all $i\in\{1,\dots,N-1\}$, $X_{-}(\omega)=X_{-}(\omega_{i})$.
\item For all $i\in\{1,\dots,N-1\}$, $X_{+}(\omega)\le\min\{x\ge x_{i}+4:\omega_{i}(x)=\omega_{i}(x+1)=0\}$.
\item Let $\eta\in\Omega$, and assume that there exists $i\in\{1,\dots,N-1\}$
such that $\tau_{x_{i}}\eta=\omega_{i}$. Then 
\begin{enumerate}
\item $X_{-}(\omega)=X_{-}(\eta)+x_{i}$;
\item $X_{+}(\omega)\le X_{4}(\eta)+x_{i}\le X_{4}(\eta)-X_{-}(\eta)$.
\end{enumerate}
\item Let $\eta\in\Omega$, $x,x'\in\Z$, $\zeta=\pm1$. Then there is at
most one possible configuration $\omega$ and one possible $i\in\{1,\dots,N(\omega)\}$
such that
\begin{enumerate}
\item $x=x_{i}$ and $\eta=\eta_{i}$,
\item $X(\omega)=x'$, and
\item $\omega(0)-\omega(1)=\zeta$.
\end{enumerate}
\item For $\omega'=\omega^{0,1}$, the path $\omega'_{1},\dots,\omega_{N}'$
beginning with $\omega'$ is given by the reversed path $\omega'_{k}=\omega_{N-k}$;
and the associated sites $x_{1}',\dots,x_{N-1}'$ are the same as
$x_{N-1},\dots,x_{1}$, which are also the same as $x_{1},\dots,x_{N-1}$.
\end{enumerate}
\end{lem}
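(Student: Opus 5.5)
The plan is to build the path explicitly by transporting a pair of adjacent vacancies. Since $\mu$ is a nondegenerate product measure, for almost every $\omega$ the set $\{x\ge0:\omega(x)=\omega(x+1)=0\}$ is nonempty, so $X(\omega)<\infty$ (and similarly $X_{\pm}$ are finite). The basic move uses the rates (\ref{eq:c_BT}). Suppose the configuration reads $a\,0\,0$ on sites $y-1,y,y+1$.
\begin{itemize}
\item First exchange $y-1,y$. This is allowed because $\eta(y+1)=0$, and it produces $0\,a\,0$.
\item Then exchange $y,y+1$. This is allowed because now $\eta(y-1)=0$, and it produces $0\,0\,a$.
\end{itemize}
So the vacancy pair moves one step to the left and the symbol $a$ moves two steps to the right. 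If $a=0$ the move is trivial and is simply skipped, so that every recorded step satisfies $\omega_{i+1}\neq\omega_i$.

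The first half of the path starts from the pair at $X(\omega),X(\omega)+1$ and repeats this move until the pair sits at $2,3$. Every exchange used involves sites $x_i\ge2$, and $x_i+1\le X(\omega)+1$, so $x_i\in[2,X(\omega)]$. With $\eta(2)=0$ the exchange of $0,1$ is allowed, and this is the step $x_{N/2}=0$. The second half performs the first half's moves in reverse order. Since $\omega(0)\neq\omega(1)$, the exchange at $0$ is nontrivial. The palindromic structure gives $x_i=x_{N-i}$ and $N$ even. The endpoint is $\omega^{0,1}$: undoing the transport restores every site except $0,1$, because sites $0,1$ are never touched during the transport.

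This already yields properties 1, 2 and 3. Property 2 holds because sites $0,1$ change only at step $N/2$. Property 3 holds because every move stays within $[0,X(\omega)+1]$. Property 8 follows from the construction. Starting from $\omega^{0,1}$, the quantity $X$ is the same, since sites $0,1$ are not a vacancy pair (one of them is occupied). The transported contents agree, so the first half of the new path is the reverse of the second half of the old one, and the site sequence is the same palindrome. Property 4 follows from 3, because $X_-$ only looks at sites $\le-1$, which are unchanged.

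For property 5, I will describe $\omega_i$ explicitly. Suppose the vacancy pair is at $y,y+1$, possibly in the middle of a move. Sites $[0,y-1]$ agree with $\omega$. Sites $[y+2,X+1]$ hold the contents of $\omega$ on $[y,X-1]$, shifted right by $2$. Sites $>X+1$ are untouched. The segment $\omega|_{[0,X]}$ has no vacancy pair before $X$, and if $X\ge1$ then $\omega(X-1)=1$. Hence the shifted segment contains no $00$ pair. Any vacancy pair of $\omega_i$ at a position $\ge x_i+4$ therefore lies beyond $X+1$, or straddles $X+1,X+2$ only if $\omega(X+2)=0$, in which case $X_+(\omega)=X+2$ anyway. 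This gives the bound; the slack $+4$ absorbs the moving pair and the half-moved intermediate configuration. Property 6 is then the translated version of 4 and 5, using $\tau_{x_i}\eta=\omega_i$. It also uses $x_i\le X(\omega)\le -X_-(\eta)$-type comparisons, which follow from $X_-(\omega)\le-2$.

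For property 7 (injectivity), I will reconstruct $\omega$ from $(\eta_i,x_i,X(\omega),\zeta)$. The site $x_i$ and the local pattern near it determine where the vacancy pair currently is and which half-move is being performed. The sign $\zeta$, compared with $\eta_i(0)-\eta_i(1)$, tells whether we are before or after the central step, by property 2. Knowing $X(\omega)$ and the pair's position, we undo the transport deterministically: shift the block back by $2$ and put the pair at $X,X+1$. This recovers $\omega$, and then $i$ is determined.

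I expect property 7, together with the edge cases of property 5, to be the main obstacle. These are the skipped trivial moves (runs of vacancies), the intermediate state $0\,a\,0$, and the boundary near $X+1$. Each of these has to be checked so that the position of the pair is unambiguous.
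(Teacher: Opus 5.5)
Your construction is the paper's: march the vacancy pair from $X(\omega)$ down to $\{2,3\}$, swap $0,1$, then undo. Your checks of properties 1--5 are essentially right once $X(\omega)\ge2$. The argument breaks at items 6 and 7, which you only sketch, and for this construction those items are false. The examples below all have $c(0,1;\omega)=0$, so they apply to the paper's version too.

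\emph{6(a) is not a translate of 4.} Only the first inequality of 6(b) is a translate of 5. If $\tau_{x_i}\eta=\omega_i$, then $X_-(\eta)+x_i$ is the last vacancy pair of $\omega_i$ starting at or before $x_i-2$, and the path itself creates such pairs inside $[0,x_i]$. Take $(\omega(-1),\dots,\omega(6))=(1,1,0,1,0,1,0,0)$, so $X(\omega)=5$. Then $x_1=4$, $\omega_2(3)=\omega_2(4)=0$ and $x_2=5$, so $X_-(\eta)=-2$. Now 6(a) would give $X_-(\omega)=3$, and the second inequality of 6(b), i.e.\ $x_i\le-X_-(\eta)$, reads $5\le2$. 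Your comparison $X(\omega)\le-X_-(\eta)$ fails here as well.

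\emph{Property 7.} The ambiguity you anticipated is real. The pattern $0,1,0,0$ on $x-1,\dots,x+2$ can be a pre-move state (pair at $x+1,x+2$). It can also be the intermediate state of a move that comes right after a skipped trivial move. Take $(\omega^A(-1),\dots,\omega^A(8))=(1,1,0,1,0,1,1,1,0,0)$ and $(\omega^B(-1),\dots,\omega^B(8))=(1,1,0,1,1,0,1,1,0,0)$, equal elsewhere. Both have $X=7$ and $\zeta=1$. Yet $\omega^A_5=\omega^B_6$, with values $(1,1,0,1,0,1,0,0,1,1)$ on $[-1,8]$, and $x^A_5=x^B_6=4$. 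With this construction only a bounded multiplicity can be proved.

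You also depart from the paper in one respect. The paper swaps directly whenever $c(0,1;\omega)=1$ and transports only when $\omega(-1)=\omega(2)=1$. That condition forces $X(\omega)\ge3$ and $X(\omega^{0,1})=X(\omega)$. Always transporting leaves $X(\omega)=1$ undefined and breaks property 8. Your claim $X(\omega^{0,1})=X(\omega)$ overlooks the pair $(1,2)$: for $(\omega(0),\dots,\omega(5))=(0,1,0,1,0,0)$ one has $X(\omega)=4$ but $X(\omega^{0,1})=1$.

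The paper's case split does not save the statement, though. For this $\omega$ it swaps directly, and then property 5 fails, since $X_4(\omega)=4<6\le X_+(\omega)$. Worse, for this $\omega$ properties 1, 5 and 8 cannot hold together for \emph{any} choice of paths:
\begin{enumerate}
\item Property 1 forces the path from $\omega^{0,1}$ to be the single swap, because its steps lie in $[0,X(\omega^{0,1})]=[0,1]$.
\item Property 8 transfers this to $\omega$.
\item Property 5 then fails.
\end{enumerate}
The paper's ``verified one by one'' does not address any of this. So the lemma itself has to be amended before your argument, or the paper's, can be completed. For example, one could use weaker forms of 5, 6 and 8 and allow bounded multiplicity in 7.
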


\begin{proof}
The construction of the path is based on the same idea as \cite{BertiniToninelli,Shapira2024Noncooperative},
see Figure \ref{fig:unbounded_path}. If $c(0,1;\omega)=1$, we can
simply exchange $0$ and $1$, so we may assume in the following that
$X(\omega)\ge3$. Start with the pair of empty sites $X(\omega),X(\omega+1)$.
Then we may exchange $X(\omega)$ with $X(\omega)-1$ and then $X(\omega)+1$
with $X(\omega)$, reaching a state $\omega_{3}$ where the empty
pair moved one step to the left; so $X(\omega_{3})=X(\omega)-1$.
Continue iteratively, until reaching $\omega_{k}$ such that $X(\omega_{k})=2$.
Then exchange sites $0$ and $1$, and roll back the previous transitions
to bring the empty pair at $2$ and $3$ back to $X(\omega),X(\omega)+1$.
The properties listed above can be verified one by one.
\end{proof}
\begin{figure}
\begin{tikzpicture}[scale=0.3, every node/.style={scale=0.6}]
	\def \x{0};
	\def \y{0};
	\draw[step=1,gray,shift={(\x,\y)}] (0,0) grid +(7,1);
	
	\draw[shift={(\x,\y)}] (0.5,0.4) node[regular polygon,regular polygon sides=3, scale=0.4, fill] {};	
	\draw[shift={(\x,\y)}] (1.5,0.5) node[regular polygon,regular polygon sides=3, scale=0.4, fill, rotate=180] {};
	\draw[shift={(\x,\y)}] (2.5,0.5) node[circle,fill, scale=0.8] {};
	\draw[shift={(\x,\y)}] (3.5,0.5) node[circle,fill, scale=0.8] {};
	\draw[shift={(\x,\y)}] (6.5,0.5) node[circle,fill, scale=0.8] {};
	
	\draw[->,shift={(\x + 0.5,\y + 0.5)}]  (7,0) to (8,0);
	
	\def \x{9};
	\def \y{0};
	\draw[step=1,gray,shift={(\x,\y)}] (0,0) grid +(7,1);

	\draw[shift={(\x,\y)}] (0.5,0.4) node[regular polygon,regular polygon sides=3, scale=0.4, fill] {};	
	\draw[shift={(\x,\y)}] (1.5,0.5) node[regular polygon,regular polygon sides=3, scale=0.4, fill, rotate=180] {};
	\draw[shift={(\x,\y)}] (2.5,0.5) node[circle,fill, scale=0.8] {};
	\draw[shift={(\x,\y)}] (4.5,0.5) node[circle,fill, scale=0.8] {};
	\draw[shift={(\x,\y)}] (6.5,0.5) node[circle,fill, scale=0.8] {};
	
	\draw[->,shift={(\x + 0.5,\y + 0.5)}]  (7,0) to (8,0);
	
	\def \x{18};
	\def \y{0};
	\draw[step=1,gray,shift={(\x,\y)}] (0,0) grid +(7,1);

	\draw[shift={(\x,\y)}] (0.5,0.4) node[regular polygon,regular polygon sides=3, scale=0.4, fill] {};	
	\draw[shift={(\x,\y)}] (1.5,0.5) node[regular polygon,regular polygon sides=3, scale=0.4, fill, rotate=180] {};
	\draw[shift={(\x,\y)}] (2.5,0.5) node[circle,fill, scale=0.8] {};
	\draw[shift={(\x,\y)}] (5.5,0.5) node[circle,fill, scale=0.8] {};
	\draw[shift={(\x,\y)}] (6.5,0.5) node[circle,fill, scale=0.8] {};
	
	\draw[->,shift={(\x + 0.5,\y + 0.5)}]  (7,0) to (8,0);
	
	\def \x{27};
	\def \y{0};
	\draw[step=1,gray,shift={(\x,\y)}] (0,0) grid +(7,1);

	\draw[shift={(\x,\y)}] (0.5,0.4) node[regular polygon,regular polygon sides=3, scale=0.4, fill] {};	
	\draw[shift={(\x,\y)}] (1.5,0.5) node[regular polygon,regular polygon sides=3, scale=0.4, fill, rotate=180] {};
	\draw[shift={(\x,\y)}] (3.5,0.5) node[circle,fill, scale=0.8] {};
	\draw[shift={(\x,\y)}] (5.5,0.5) node[circle,fill, scale=0.8] {};
	\draw[shift={(\x,\y)}] (6.5,0.5) node[circle,fill, scale=0.8] {};
	
	\draw[->,shift={(\x + 0.5,\y + 0.5)}]  (7,0) to (8,0);
	
	\def \x{36};
	\def \y{0};
	\draw[step=1,gray,shift={(\x,\y)}] (0,0) grid +(7,1);

	\draw[shift={(\x,\y)}] (0.5,0.4) node[regular polygon,regular polygon sides=3, scale=0.4, fill] {};	
	\draw[shift={(\x,\y)}] (1.5,0.5) node[regular polygon,regular polygon sides=3, scale=0.4, fill, rotate=180] {};
	\draw[shift={(\x,\y)}] (4.5,0.5) node[circle,fill, scale=0.8] {};
	\draw[shift={(\x,\y)}] (5.5,0.5) node[circle,fill, scale=0.8] {};
	\draw[shift={(\x,\y)}] (6.5,0.5) node[circle,fill, scale=0.8] {};
	
	\draw[->,shift={(0,-1.5)}]  (0,0) to (1,0);
	
	\def \x{2};
	\def \y{-2};
	\draw[step=1,gray,shift={(\x,\y)}] (0,0) grid +(7,1);

	\draw[shift={(\x,\y)}] (0.5,0.5) node[regular polygon,regular polygon sides=3, scale=0.4, fill, rotate=180] {};	
	\draw[shift={(\x,\y)}] (1.5,0.4) node[regular polygon,regular polygon sides=3, scale=0.4, fill] {};
	\draw[shift={(\x,\y)}] (4.5,0.5) node[circle,fill, scale=0.8] {};
	\draw[shift={(\x,\y)}] (5.5,0.5) node[circle,fill, scale=0.8] {};
	\draw[shift={(\x,\y)}] (6.5,0.5) node[circle,fill, scale=0.8] {};
	
	\draw[->,shift={(\x + 0.5,\y + 0.5)}]  (7,0) to (8,0);
	
	\def \x{11};
	\def \y{-2};
	\draw[step=1,gray,shift={(\x,\y)}] (0,0) grid +(7,1);

	\draw[shift={(\x,\y)}] (0.5,0.5) node[regular polygon,regular polygon sides=3, scale=0.4, fill, rotate=180] {};	
	\draw[shift={(\x,\y)}] (1.5,0.4) node[regular polygon,regular polygon sides=3, scale=0.4, fill] {};
	\draw[shift={(\x,\y)}] (3.5,0.5) node[circle,fill, scale=0.8] {};
	\draw[shift={(\x,\y)}] (5.5,0.5) node[circle,fill, scale=0.8] {};
	\draw[shift={(\x,\y)}] (6.5,0.5) node[circle,fill, scale=0.8] {};
	
	\draw[->,shift={(\x + 0.5,\y + 0.5)}]  (7,0) to (8,0);
	
	\def \x{20};
	\def \y{-2};
	\draw[step=1,gray,shift={(\x,\y)}] (0,0) grid +(7,1);

	\draw[shift={(\x,\y)}] (0.5,0.5) node[regular polygon,regular polygon sides=3, scale=0.4, fill, rotate=180] {};	
	\draw[shift={(\x,\y)}] (1.5,0.4) node[regular polygon,regular polygon sides=3, scale=0.4, fill] {};
	\draw[shift={(\x,\y)}] (2.5,0.5) node[circle,fill, scale=0.8] {};
	\draw[shift={(\x,\y)}] (5.5,0.5) node[circle,fill, scale=0.8] {};
	\draw[shift={(\x,\y)}] (6.5,0.5) node[circle,fill, scale=0.8] {};
	
	\draw[->,shift={(\x + 0.5,\y + 0.5)}]  (7,0) to (8,0);
	
	\def \x{29};
	\def \y{-2};
	\draw[step=1,gray,shift={(\x,\y)}] (0,0) grid +(7,1);

	\draw[shift={(\x,\y)}] (0.5,0.5) node[regular polygon,regular polygon sides=3, scale=0.4, fill, rotate=180] {};	
	\draw[shift={(\x,\y)}] (1.5,0.4) node[regular polygon,regular polygon sides=3, scale=0.4, fill] {};
	\draw[shift={(\x,\y)}] (2.5,0.5) node[circle,fill, scale=0.8] {};
	\draw[shift={(\x,\y)}] (4.5,0.5) node[circle,fill, scale=0.8] {};
	\draw[shift={(\x,\y)}] (6.5,0.5) node[circle,fill, scale=0.8] {};
	
	\draw[->,shift={(\x + 0.5,\y + 0.5)}]  (7,0) to (8,0);
	
	\def \x{38};
	\def \y{-2};
	\draw[step=1,gray,shift={(\x,\y)}] (0,0) grid +(7,1);

	\draw[shift={(\x,\y)}] (0.5,0.5) node[regular polygon,regular polygon sides=3, scale=0.4, fill, rotate=180] {};	
	\draw[shift={(\x,\y)}] (1.5,0.4) node[regular polygon,regular polygon sides=3, scale=0.4, fill] {};
	\draw[shift={(\x,\y)}] (2.5,0.5) node[circle,fill, scale=0.8] {};
	\draw[shift={(\x,\y)}] (3.5,0.5) node[circle,fill, scale=0.8] {};
	\draw[shift={(\x,\y)}] (6.5,0.5) node[circle,fill, scale=0.8] {};
	
\end{tikzpicture}

\caption{\label{fig:unbounded_path}The path constructed in Lemma \ref{lem:path}.
The leftmost site is $0$, the initial occupation of $0$ and $1$
is marked by black triangles facing up and down. In this example $X(\omega)=4$.}

\end{figure}
We can now use the path to define a flow: for any $\omega\in\Omega$,
with the notation of the lemma, let $\theta_{\omega}$ be the flow:
\[
\theta_{\omega}(\eta,\eta^{x,x+1})=\begin{cases}
\omega(0)-\omega(1) & \text{if }\exists i\in\{1,\dots,N-1\}\text{ such that }\eta=\omega_{i},x=x_{i},\\
\omega(1)-\omega(0) & \text{if }\exists i\in\{1,\dots,N-1\}\text{ such that }\eta=\omega_{i+1},x=x_{i},\\
0 & \text{otherwise},
\end{cases}
\]
and 
\[
\theta=\sum_{\omega}\theta_{\omega}.
\]
Then 
\begin{eqnarray*}
\div\theta_{\omega}(\eta) & = & \begin{cases}
\eta(0)-\eta(1) & \text{if }\eta=\omega\text{ or }\eta=\omega^{0,1},\\
0 & \text{otherwise};
\end{cases}\\
\div\theta(\eta) & = & 2\left(\eta(0)-\eta(1)\right).
\end{eqnarray*}
Hence, $\theta\in V_{0}$, and we are left with calculating $\left\langle \theta,\theta\right\rangle $
and $\left\langle \phi_{l},\theta\right\rangle $.

We can now use Lemma \ref{lem:path} to write
\begin{multline*}
\sum_{x\in\Z}\tau_{x}\theta(\eta,\eta^{0,1})=\sum_{x\in\Z}\sum_{\omega\in\Omega}\theta_{\omega}(\tau_{x}\eta,(\tau_{x}\eta)^{x,x+1})\\
=\sum_{x\in\Z}\sum_{\omega\in\Omega}\sum_{i=1}^{N(\omega)}\left(\omega(0)-\omega(1)\right)\One_{\tau_{x}\eta=\omega_{i}}\One_{x=x_{i}}-\sum_{x}\sum_{\omega}\sum_{i}\left(\omega(0)-\omega(1)\right)\One_{\tau_{x}\eta=\omega_{i+1}}\One_{x=x_{i}}\\
=2\sum_{\zeta=\pm1}\sum_{x'=0}^{X_{4}(\eta)-X_{-}(\eta)}\sum_{x=0}^{x'+1}\sum_{\omega\in\Omega}\sum_{i=1}^{N(\omega)}\ \zeta\ \One_{\tau_{x}\eta=\omega_{i}}\One_{x=x_{i}}\One_{\omega(0)-\omega(1)=\zeta}\One_{x'=X(\omega)}.
\end{multline*}
By point 6 of Lemma \ref{lem:path} all the indicators allow us to
remove the sums over $\omega$ and $i$, yielding
\[
\left|\sum_{x\in\Z}\tau_{x}\theta(\eta,\eta^{0,1})\right|\le4\left(X_{4}(\eta)-X_{-}(\eta)+1\right)^{2},
\]
and therefore

\begin{eqnarray*}
\left\langle \theta,\theta\right\rangle  & \le & 8\mu\left[\left(X_{4}(\eta)-X_{-}(\eta)+1\right)^{4}\right].
\end{eqnarray*}
This expectation is explicit ($-2-X_{-}$ and $X_{3}-4$ are IID geometric
random variables with parameter $q^{2}$), and is bounded by $2401q^{-8}$,
hence 
\[
\left\langle \theta,\theta\right\rangle \le20000\ q^{-8}.
\]

Finally,
\begin{eqnarray*}
\left\langle \phi_{l},\theta\right\rangle  & = & \frac{1}{2}\sum_{y}\mu\left[\theta(\eta,\eta^{y,y+1})\,(\eta(y)-\eta(y+1))\right]\\
 & = & \frac{1}{2}\mu\left[\theta(\eta,\eta^{0,1})(\eta(0)-\eta(1))\right]+\frac{1}{2}\sum_{y\ge2}\mu\left[\theta(\eta,\eta^{y,y+1})\,(\eta(y)-\eta(y+1))\right]\\
 & = & \mu\left[(\eta(0)-\eta(1))^{2}\right]+\frac{1}{2}\sum_{y\ge2}\sum_{\omega}\mu\left[\theta_{\omega}(\eta,\eta^{y,y+1})\,(\eta(y)-\eta(y+1))\right].
\end{eqnarray*}
The second term of this sum is $0$: fix $\eta\in\Omega,\omega\in\Omega,y\ge2$
such that $y=x_{i}$ and $\eta=\omega_{i}$ for some $i$, so $\theta_{\omega}(\eta,\eta^{y,y+1})=\omega(0)-\omega(1)$.
By the properties of the path, if we consider the path beginning with
$\omega'=\omega^{0,1}$, we get $\eta=\omega_{N/2-i}^{\prime}$ and
$y=x_{N/2-i}'$. Then $\theta_{\omega'}(\eta,\eta^{y,y+1})=\omega'(0)-\omega'(1)=-\theta_{\omega}(\eta,\eta^{y,y+1})$.
The same argument holds for the case $y=x_{i}$ and $\eta=\omega_{i+1}$,
so in total
\[
\sum_{\omega}\theta_{\omega}(\eta,\eta^{y,y+1})=0,\qquad\forall y\ge2.
\]
Therefore
\begin{eqnarray*}
\left\langle \phi_{l},\theta\right\rangle  & = & 2pq,
\end{eqnarray*}
ending up with
\begin{equation}
D\ge\frac{1}{pq}\frac{4p^{2}q^{2}}{20000\ q^{-8}}=\frac{1}{5000}\ pq^{9}.
\end{equation}

\bigskip{}

\begin{rem}
\label{rem:combined_flow}To conclude this section, we note that any
linear combination $\lambda_{1}\phi+\lambda_{2}\psi+\lambda_{3}\theta$
is in $V_{0}$. To avoid some computational complication, let us stick
to the first two flows, maximizing 
\begin{eqnarray*}
F(\lambda_{1},\lambda_{2}) & = & 2\left\langle \phi_{l},\lambda_{1}\phi+\lambda_{2}\psi\right\rangle -\left\langle \lambda_{1}\phi+\lambda_{2}\psi,\lambda_{1}\phi+\lambda_{2}\psi\right\rangle \\
 & = & 2\left\langle \phi_{l},\phi\right\rangle \lambda_{1}+2\left\langle \phi_{l},\psi\right\rangle \lambda_{2}-\left\langle \phi,\phi\right\rangle \lambda_{1}^{2}-\left\langle \psi,\psi\right\rangle \lambda_{2}^{2}-2\left\langle \phi,\psi\right\rangle \lambda_{1}\lambda_{2}
\end{eqnarray*}
in the case $q=p=1/2$. Then (after computations)
\[
F(\lambda_{1},\lambda_{2})=\frac{1}{4}\lambda_{1}+\frac{3}{4}\lambda_{2}-\frac{3}{32}\lambda_{1}^{2}-\frac{13}{8}\lambda_{2}^{2}-\frac{7}{16}\lambda_{1}\lambda_{2},
\]
whose maximum gives $\chi D\ge\frac{37}{214}\approx0.173$. This could
be compared with the bound given by $\phi$, $\chi D\ge\frac{1}{6}\approx0.167$
and the one given by $\psi$, $\chi D\ge\frac{9}{104}\approx0.087$.
\end{rem}

\bibliographystyle{amsalpha}
\bibliography{thomson_diffusion}

\end{document}